\def \geq {\geqslant}
\def \leq {\leqslant}
\newtheorem{Theorem}{Theorem}
\newtheorem{Proposition}{Proposition}
\newtheorem{Corollary}{Corollary}
\newtheorem{Lemma}{Lemma}
\newtheorem{prb}{Problem}
\newcommand{\R}{{\mathbb R}}
\newcommand{\Z}{{\mathbb Z}}
\newcommand{\X}{{\mathbb X}}
\newcommand{\Y}{{\mathcal Y}}
\begin{document}
	
\title{All finite sets are Ramsey in the maximum norm}
\author{Andrey Kupavskii\thanks{MIPT, Moscow, Russia; IAS, Princeton, USA; CNRS, Grenoble, France. Email: {\tt kupavskii@ya.ru}}, Arsenii Sagdeev\thanks{MIPT, Moscow, Russia. Email: {\tt sagdeev.aa@phystech.edu}}}
\date{}
	
\maketitle
\begin{abstract}
	For two metric spaces $\mathbb X$ and $\mathcal Y$, the chromatic number $\chi(\mathbb X;\mathcal Y)$ of $\mathbb X$ with forbidden $\mathcal Y$ is the smallest $k$ such that there is a coloring of the points of $\mathbb X$ with $k$ colors and no monochromatic copy of $\mathcal Y$. In this paper, we show that for each finite metric space $\mathcal{M}$ that contains at least two points the value $\chi\left(\mathbb R^n_\infty; \mathcal M \right)$ grows exponentially with $n$. We also provide explicit lower and upper bounds for some special $\mathcal M$.
	\end{abstract}
\textbf{MSC classification codes:} 05D10, 52C10	

\section{Introduction}

Ramsey theory is a central part of modern combinatorics with many connections to other areas, such as logic, number theory, and computer science. Its  topic is to find homogeneous substructures in sufficiently large or dense structures. The early examples include Schur's lemma, van der Waerden's theorem, and Ramsey's theorem. We refer the reader to the classical book \cite{GraRothSpen1990} for a survey of early developments of Ramsey theory. 

Most early results were concerned with finding homogeneous substructures in combinatorial structures (such as monochromatic complete subgraphs in any two-coloring of edges of a large complete graph). The problems we are interested in in this  paper are of geometric nature.

In 1950 Nelson posed the following question: what is the minimal number of colors needed to color all points of the Euclidean plane $\mathbb{R}^2$ such that no two points at unit distance apart receive the same color? This quantity is called {\it the chromatic number of the plane} and is denoted by $\chi( \mathbb{R}^2) $. This problem received great interest from the mathematics community, and even the origins of the questions spurred heated debates, partly because Nelson asked his question in private communication. We refer to the book of Soifer \cite{Soi2008_MathColBook} and surveys of Raigorodskii \cite{Rai2001, Rai2013} for a comprehensive account of the problem.

For almost 70 years, the problem resisted the attacks from numerous researchers, and the state of the art was at the easy-to-get lower and upper bounds $4\leq \chi(\R^2)\leq 7$ due to Nelson and Isbell respectively (or due to Mosers'  \cite{MM} and Hadwiger \cite{Had3}, see the aforementioned Soifer's historical research \cite{Soi2008_MathColBook}). Recently, the combinatorics community was shaken when an amateur mathematician de Grey \cite{deGrey} improved the lower bound to $\chi(\R^2)\geq 5$. A few weeks later, Exoo and Ismailescu \cite{ExIs2019} gave another proof of this bound. Since then, the initial de Grey's construction has been simplified several times within the framework of the Polymath16 Project (the current record is due to Parts~\cite{Par}).
	
One natural generalization of Nelson's problem concerns chromatic numbers of Euclidean spaces of other dimensions. One can find the current best lower and upper bounds on $\chi(\mathbb{R}^n)$ for several small values of $n$ in \cite{BogRai2019, CherKulRai2018}. As for $n$ large, Frankl and Wilson \cite{FranklWil1981} showed that the function $\chi( \mathbb{R}^n) $ grows exponentially with $n$. The best known asymptotic lower and upper bounds are  $\left(1.239...+o(1)\right)^n  \leq \chi(\mathbb{R}^n) \leq \left(3+o(1)\right)^n$ as $n \rightarrow \infty$ (see Larman and Rogers \cite{LarRog1972} or Prosanov \cite{Pros2020_LR} for the upper bound and  Raigorodskii \cite{Rai2000} for the lower bound).
	
A systematic study of such questions on the interface of geometry and Ramsey theory, named {\it Euclidean Ramsey theory}, began with three papers \cite{EGMRSS1,EGMRSS2,EGMRSS3} of Erd\H os, Graham, Montgomery, Rothschild, Spencer, and Straus.  Given a subset $S \subset \mathbb{R}^d$ (with induced metric), the value $\chi( \mathbb{R}^n; S)$ is defined to be the minimum number of colors needed to color all points of Euclidean space $\mathbb{R}^n$ with no monochromatic  isometric copy $S' \subset \mathbb{R}^n$ of $S$. Note this this definition only makes sense if  $|S|\ge 2$. In this case the value $\chi( \mathbb{R}^n; S)$ is well-defined, i.e., the corresponding minimum always exists, since we trivially have $\chi( \mathbb{R}^n; S) \le \chi(\mathbb{R}^n)$. %In what follows, whenever not specified, we consider only such non-degenerate cases.

A set $S \subset \mathbb{R}^d$ is called {\it $\ell_2-$Ramsey} if $\chi( \mathbb{R}^n; S) $ tends to infinity as $n \rightarrow \infty$. %This notion originates from the paper \cite{EGMRSS1}. %where the authors call a set $S \subset \mathbb{R}^d$ Ramsey if for all $r \in \mathbb{N}$ there is $n \in \mathbb{N}$ such that $\chi(\mathbb{R}^n; S)>r$. Observe that these two definitions are clearly equivalent. 
Similarly, a set $S$ is called {\it exponentially $\ell_2-$Ramsey} if there is a constant $\chi_S > 1$ such that $\chi( \mathbb{R}^n; S)  > \left( \chi_S+o(1)\right)^n$ as $n \rightarrow \infty$.
Relatively few sets are known to be exponentially $\ell_2-$Ramsey. Frankl and R\"odl \cite{FranklRodl1990} proved that the vertex sets of simplices and bricks (or hyperrectangles) are exponentially $\ell_2-$Ramsey. One can find several explicit exponential lower and upper bounds for these sets in  \cite{Nas2019_EqTri, Pros2018_ExpRams, Sag2018_CartProd, Sag2019_LotsEdges, SagRai2019_EuroComb}. More sets are known to have a weaker property of being $\ell_2-$Ramsey. K\v{r}\'\i\v{z} \cite{Kriz1991} proved that each `fairly symmetric' set is $\ell_2-$Ramsey. Later, this was used  by himself  \cite{Kriz1992} and Cantwell \cite{Cant2007} to show that the set of vertices of each regular polytope is $\ell_2-$Ramsey. Note that it is unknown if there is an $\ell_2-$Ramsey set that is not exponentially $\ell_2-$Ramsey.

At the same time, we know a strong necessary condition for a set to be $\ell_2-$Ramsey. Erd\H os et al. showed that each $\ell_2-$Ramsey set must be finite \cite{EGMRSS2} and {\it spherical} \cite{EGMRSS1}, i.e. be isometric to a subset of a sphere of some dimension. There is a popular conjecture stating that this is also sufficient. There is also a `rival' conjecture proposed in \cite{LeadRussWal2012} that states that only so-called `subtransitive' sets are $\ell_2-$Ramsey.

Another direction for generalizations that was explored is to work with other metrics than the Euclidean one. The natural candidates are the
$\ell_p-$metrics, defined for $\mathbf{x}, \mathbf{y} \in \mathbb{R}^n$ by $\|\mathbf{x}-\mathbf{y}\|_{p} = (\left|x_1-y_1\right|^p+\dots+\left|x_n-y_n\right|^p)^{1/p}$,
or  the {\it Chebyshev metric} (also known as the {\it maximum metric}) $\ell_\infty$ defined by
$\|\mathbf{x}-\mathbf{y}\|_{\infty} = \max_{1 \leq i \leq n} \left\{ \left|x_i-y_i\right|\right\}.$
We denote the corresponding spaces by $\R^n_{p}$ and the corresponding chromatic numbers by $\chi(\mathbb{R}^n_p)$. For all $1<p<\infty$, $p \neq 2$, the best known lower bound due to Frankl and Wilson \cite{FranklWil1981} is $ \chi(\mathbb{R}^n_p) \ge (1.207...+o(1))^n$ as $n \rightarrow \infty$, while in case $p=1$, i.e., in case of the {\it taxicab} or {\it Manhattan} metric, Raigorodskii \cite{Rai2004} proved a better lower bound $\chi(\mathbb{R}^n_1) \ge (1.366...+o(1))^n$. As for the upper bound, the best result for all real $p \neq 2$ due to the first author \cite{Kup2011} is $\chi(\mathbb{R}^n_p) \le (4+o(1))^n$ as $n \rightarrow \infty$ (also valid for any norm). The case $p = \infty$ stands out here because of the folklore equality $\chi( \mathbb{R}^n_\infty) = 2^n,$ valid  for each $n \in \mathbb{N}$. For completeness, we will give its simple proof in the next section.

The questions mentioned above can be described using the following general setup. Let $\X = \left( X, \rho_X\right), \Y = ( Y, \rho_Y)$ be two metric spaces. A subset $Y' \subset X$ is called a {\it copy} of $\Y$ if there is an {\it isometry} $f: Y \rightarrow Y'$, i.e., a bijection such that $\rho_Y(y_1,y_2) = \rho_X\big(f(y_1), f(y_2)\big)$ for all $y_1, y_2 \in Y$. The {\it chromatic number $\chi(\X;\Y)$ of the space $\X$ with a forbidden subspace $\mathcal{Y}$} is the minimal $k$ such that there is a coloring of elements of $X$ with $k$ colors and no monochromatic copy of $\Y$. 

In this paper, we focus on the case of $\X = \mathbb{R}^n_\infty$. Recall that for each  finite metric space $\mathcal{M}$ of size at least $2$ we trivially have $\chi(\mathbb{R}^n_\infty; \mathcal{M}) \le \chi(\mathbb{R}^n_\infty) = 2^n$ for all $n \in \mathbb{N}$. As for the lower bound, observe that it is not even obvious that $\chi(\mathbb{R}^n_\infty; \mathcal{M}) > 1$ for some $n$. This inequality follows form a result due to  Fr\'echet (see, e.g., \cite{Mat}) that states that every finite metric space can be embedded into $\mathbb{R}^n_\infty$ for some  $n \in \mathbb{N}$. We give the precise statement along with the short proof in Section~\ref{sec3} for completeness (see Lemma~\ref{L Frechet}). The following theorem, which  is the main result of our paper, gives a much stronger lower bound.

\begin{Theorem} \label{T1}
	Any finite metric space $\mathcal{M}$ that contains at least two points is exponentially $\ell_\infty-$Ramsey, i.e., there is a constant $\chi_\mathcal{M} > 1$ such that $\chi(\mathbb{R}^n_\infty; \mathcal{M}) > \left( \chi_\mathcal{M}+o(1)\right)^n$ as $n \rightarrow \infty$. 
\end{Theorem}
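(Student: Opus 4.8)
The plan is to normalize $\mathcal{M}$, dispatch the ``uniform'' case by a clique, and then attack the general case by a clique‑amplification recursion on the distance scales of $\mathcal{M}$.

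\textbf{Setup.} By Fr\'echet's embedding lemma (Lemma~\ref{L Frechet}) we may regard $\mathcal{M}=\{m_1,\dots,m_k\}$ as a subset of $\mathbb{R}^t_\infty$ with $t=|\mathcal{M}|$, and, since rescaling the forbidden set does not change $\chi(\mathbb{R}^n_\infty;\cdot)$, we may assume its smallest positive distance is $1$. If all $\binom{k}{2}$ distances equal $1$, the $2^n$ vertices of $\{0,1\}^n\subset\mathbb{R}^n_\infty$ are pairwise at distance $1$, so any $k$ of them form a copy of $\mathcal{M}$; hence each color class meets $\{0,1\}^n$ in fewer than $k$ points and $\chi(\mathbb{R}^n_\infty;\mathcal{M})\ge 2^n/(k-1)$. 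The content of the theorem is that the presence of several distinct distances does not destroy exponential growth.

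\textbf{Clique amplification.} For a general $\mathcal{M}$ I would prove an inequality of the form
\[
\chi\big(\mathbb{R}^{\,n+N}_\infty;\mathcal{M}\big)\ \ge\ \min\!\Big(\tfrac1k\,2^{\,n},\ \chi\big(\mathbb{R}^{N}_\infty;\mathcal{M}^{\flat}\big)\Big),
\]
where $\mathcal{M}^{\flat}$ is obtained from $\mathcal{M}$ by turning its length‑$1$ edges into ``$\le 1$'' constraints. Given a good $r$‑coloring of $\mathbb{R}^{n+N}_\infty$, restrict it to $\{0,1\}^n\times\mathbb{R}^N_\infty$; for each $y\in\mathbb{R}^N$ the clique $\{0,1\}^n\times\{y\}$ has $2^n$ points, so some color $c^*(y)$ occurs there at least $2^n/r$ times. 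If $r<2^n/k$ and $c^*$ had a color‑$\gamma$ copy $y_1,\dots,y_k$ of $\mathcal{M}^{\flat}$, then each fibre $\{x:\operatorname{color}(x,y_i)=\gamma\}$ has more than $k$ elements, so a greedy choice produces \emph{distinct} $x_1,\dots,x_k\in\{0,1\}^n$ with $\operatorname{color}(x_i,y_i)=\gamma$ for all $i$; since the cube coordinates raise every pairwise distance to $\max(1,\|y_i-y_j\|_\infty)$, which equals $d_{ij}$ once the $y_i$ realize $\mathcal{M}^{\flat}$, the points $(x_i,y_i)$ form a monochromatic copy of $\mathcal{M}$ — a contradiction. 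Hence $c^*$ is a good $\mathcal{M}^{\flat}$‑coloring of $\mathbb{R}^N_\infty$ with $\le r$ colors, which yields the displayed bound.

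\textbf{Iteration and the crux.} Iterating this (using the cube $\{0,\delta\}^n$ in a fresh block of coordinates to absorb distance $\delta$), one would peel the distinct distances of $\mathcal{M}$ off one at a time, reaching a partial configuration all of whose constraints are inequalities, whose chromatic number is infinite, so that the minimum is never attained there. Taking $s$ equal blocks of dimension $m$, with $s$ the (constant) number of distinct distances of $\mathcal{M}$, this would give $\chi(\mathbb{R}^{sm+O(1)}_\infty;\mathcal{M})\ge 2^m/k$, i.e.\ a lower bound $\big(2^{1/s}+o(1)\big)^n$. The step I expect to be hardest is that a longer distance of $\mathcal{M}$ may be \emph{entangled} with shorter ones — forced by the triangle inequality, as already happens for the three‑point line $\{0,1,2\}$ — so that relaxing the short edges forces all the relevant cube coordinates to coincide and the recursion stalls. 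Overcoming this will likely require either a more delicate induction (absorbing suitably chosen families of pairs together, guided by a refined complexity measure on the partial configurations), or replacing the clean recursion by a Frankl--R\"odl–type supersaturation count inside one large grid $[L]^{\Theta(n)}_\infty$ — where copies of $\mathcal{M}$ are exactly its translates — showing that any color class of relative density exceeding $c^{-n}$ must contain one. Carrying out either route so that it is simultaneously correct and quantitatively exponential is the heart of the matter.
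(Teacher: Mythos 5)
Your ``clique amplification'' lemma is sound as stated --- for distinct $x_i\neq x_j\in\{0,1\}^n$ one indeed has $\|(x_i,y_i)-(x_j,y_j)\|_\infty=\max\bigl(1,\|y_i-y_j\|_\infty\bigr)$, and if the $y_i$ realize the relaxed set $\mathcal M^{\flat}$ this equals $d_{ij}$, so the greedy selection of distinct $x_i$ works once each fibre has size $>k$. The uniform case and the Fr\'echet reduction are also fine, and the paper does use Fr\'echet's lemma (Lemma~\ref{L Frechet}) in the same role. The problem is exactly where you flag it: the iteration collapses already on $\mathcal B_2=\{0,1,2\}$. Relaxing the two unit edges to ``$\le 1$'' buys nothing, since the triangle inequality for the length‑$2$ edge forces both back to equality, so $\mathcal M^{\flat}$ has the same chromatic number as $\mathcal M$ and the recursion makes no progress. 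Your two suggested repairs (``more delicate induction'' or a ``Frankl--R\"odl--type supersaturation count'') are named but not carried out, and for the second you even state an incorrect premise --- $\ell_\infty$-copies of $\mathcal M$ inside a grid $[L]^{\Theta(n)}$ are emphatically \emph{not} just translates; this fact is what makes the problem nontrivial. So what you have is an accurate diagnosis of the difficulty, not a proof.

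For comparison, the paper's route is quite different at the core. It first proves a supersaturation statement (Theorem~\ref{B1}) for the integer baton $\mathcal B_k$ inside the discrete box $[k]_0^n$, via a coordinate‑wise shifting map $f$ that is injective and pushes the image off the hyperplane $x_n=0$, which is precisely what circumvents the ``entanglement'' that stalls your recursion. It then upgrades to arbitrary batons $\mathcal B(\alpha_1,\dots,\alpha_k)$ via a simultaneous Dirichlet approximation of $\alpha_1,\dots,\alpha_k$ (Lemma~\ref{L2}), producing a bijection from $[m]_0$ to a carefully designed set $A\subset\mathbb R$ that preserves the relevant $\ell_\infty$-distance pattern. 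Finally it passes from batons to grids via the Frankl--R\"odl product theorem (Theorem~\ref{CartProd1}) and from grids to arbitrary finite metric spaces via Fr\'echet. None of these three technical ingredients --- the shifting argument, the Diophantine construction, or the product theorem --- appears in your proposal, and they are exactly what fills the gap you identified as ``the heart of the matter.''
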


The rest of the present paper is organized as follows.  We start Section~\ref{sec2} by reciting the proof of $\chi( \mathbb{R}^n_\infty)=2^n$ and then prove Theorem~\ref{T1} for  `$1-$dimensional' metric spaces, called batons. This step is crucial in the proof of Theorem~\ref{T1}, and the results in that section are also of independent interest. In Section~\ref{sec3} we deduce Theorem \ref{T1} from the results of Section~\ref{sec2}. In Section~\ref{sec4} we provide upper bounds on the values $\chi(\mathbb{R}^n_\infty; \mathcal{M})$. Finally, Section~\ref{sec5} contains the discussion of the results and some open problems.

In what follows, whenever not specified, the distances are taken in the Chebyshev metric. We also slightly abuse notation and identify each set $S \subset \R^d$ with the corresponding metric space $\left(S, \ell_\infty\right)$ which is a subspace of  $\R^d_\infty$.

\section{Batons}\label{sec2}
For a $k \in \mathbb{N}$, denote $\left[k\right]_0 = \left\{0,1,  2,\, \dots \,,k\right\}$ (note that this is a slightly non-standard notation). Given a sequence of positive real numbers $\alpha_1,\, \dots \, , \alpha_k$,  a {\it baton} $\mathcal{B}(\alpha_1,\, \dots \, , \alpha_k)$ is a metric space isometric to a set of points $\left\{0, \alpha_1,  \alpha_1+\alpha_2,\, \dots \,,\sum_{i=1}^{k}\alpha_i \right\} \subset \mathbb{R}$ with the metric induced from $\mathbb{R}$. If  $\alpha_1=\dots=\alpha_k=1$ then we denote this space  $\mathcal{B}_k$ for shorthand.

\subsection{$\chi( \mathbb{R}^n_\infty)=2^n$}
We start the proofs with the simple but instructive case of $\chi( \mathbb{R}^n_\infty) = \chi( \mathbb{R}^n_\infty,\mathcal B_1)$. As we have already mentioned, the equality $\chi( \mathbb{R}^n_\infty)=2^n$ is folklore, and we give its proof for completeness.

Given $n \in \mathbb{N}$, let us denote $m=2^n$ for convenience. Let $\mathbf{v}_1, \,\dots\, , \mathbf{v}_m$ be a set of vertices of a standard (discrete) unit cube $\{0,1\}^n$. %, i.e. all the points of $\R^n$ such that each of theirs coordinates is equal to either $0$ or $1$. 
Note that $\|\mathbf{v}_i-\mathbf{v}_j\|_\infty = 1$ for all $i\neq j$, and thus $\chi( \mathbb{R}^n_\infty) \geq 2^n$ because we need to use a distinct color for each $\mathbf v_i$.

To prove the matching  upper bound, we explicitly describe the coloring. Let $$\mathcal{C} = \bigsqcup_{\mathbf{w}\in \Z^n} \big( \left[0;1\right)^n+2\mathbf{w} \big) $$ be a disjoint union of unit cubes. It is clear that for each $\mathbf{x},\mathbf{y} \in \mathcal{C}$ we have $\|\mathbf{x}-\mathbf{y}\|_\infty \neq 1$. Indeed, one has $\|\mathbf{x}-\mathbf{y}\|_\infty < 1$ whenever $\mathbf{x}$ and $\mathbf{y}$ come from the same unit cube, and $\|\mathbf{x}-\mathbf{y}\|_\infty > 1$ whenever they are from different cubes. Given $i \leq m$, let us denote $\mathcal{C}_i = \mathcal{C} + \mathbf{v}_i$ (where $\mathbf{v}_i$ were defined in the previous paragraph).  Color each point of $\mathcal{C}_i$ with the $i$'th color. We have $\bigsqcup_{i=1}^m \mathcal{C}_i = \R^n$, and thus it is a well-defined proper coloring of $\R^n$. This shows that $\chi( \mathbb{R}^n_\infty) \leq 2^n$.

\subsection{$\mathcal B_k$ is exponentially $\ell_\infty-$Ramsey}

\begin{Theorem} \label{B1}
	Let $k, n$ be positive integers. Then each subset $X \subset \left[k\right]_0^n \subset \mathbb{R}^n_\infty$ of cardinality $\left|X\right|>k^n$ contains a copy of $\mathcal{B}_k$.
\end{Theorem}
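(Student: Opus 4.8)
The plan is to prove this by induction on $k$, viewing each point of $[k]_0^n$ through its coordinates and tracking how often a given "level" is attained. The base case $k=1$ is exactly the statement $\chi(\mathbb R^n_\infty) = 2^n$ argument: a set $X \subset \{0,1\}^n$ with $|X| > 1$ contains two points at $\ell_\infty$-distance exactly $1$ (since any two distinct vertices of the cube are at distance $1$), which is a copy of $\mathcal B_1$. For the inductive step, suppose the claim holds for $k-1$, and let $X \subseteq [k]_0^n$ with $|X| > k^n$. The key idea I would use is a counting/pigeonhole argument on a cleverly chosen coordinate or on a projection. Specifically, consider the map that "collapses" the value $k$ down — or, dually, look at the largest coordinate value appearing.

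Here is the mechanism I would try to make precise. For each point $\mathbf x \in X$, associate the set $S(\mathbf x) = \{ i : x_i = k \} \subseteq [n]$ of coordinates where $\mathbf x$ attains the maximal value $k$. If many points of $X$ share the same such set $S$, restrict attention to those; outside $S$ their coordinates live in $[k-1]_0$, and on $S$ they are all equal, so effectively we are inside $[k-1]_0^{n-|S|}$. A direct pigeonhole here does not immediately give the right bound $k^n$ versus $(k-1)^{n}$, so the accounting has to be sharper. The cleaner approach: partition $X$ according to which coordinates equal $k$, but more importantly, the intended argument is likely a weighting/averaging argument. For each point $\mathbf x$ and each coordinate $i$, define the "shift" of $\mathbf x$ in direction $i$: if we can find, for every $\mathbf x \in X$, a coordinate $i$ such that both $\mathbf x$ and $\mathbf x + \mathbf{e}_i$ (appropriately defined, staying in range) behave well, we chain this to build the baton.

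The cleanest route, and the one I would actually write, is the following counting argument. Define $f: [k]_0^n \to [k-1]_0^n$ by applying to each coordinate the map $t \mapsto \min(t, k-1)$ (i.e. $k$ and $k-1$ both map to $k-1$). Since $|X| > k^n$ and $|[k-1]_0^n| = k^n$, the map $f$ restricted to $X$ is not injective, so there exist distinct $\mathbf x, \mathbf y \in X$ with $f(\mathbf x) = f(\mathbf y)$; these agree on every coordinate where neither equals $k$, and on the remaining coordinates each is $k-1$ or $k$. Hmm — this only gives $\|\mathbf x - \mathbf y\|_\infty \le 1$, not a baton of length $k$. So instead I would apply the pigeonhole inside a single fiber more carefully, or iterate: the right statement to induct on is probably obtained by peeling off one unit at a time and showing that a large subset of $[k]_0^n$ must contain two points at distance exactly $k$ that are moreover "aligned" with a chain of intermediate points. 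I expect the main obstacle to be exactly this: a naive pigeonhole gives pairs at distance $\le 1$ but not the nested chain $0, 1, \dots, k$ along a common line, so the induction hypothesis must be strengthened — perhaps to the statement that $X$ contains, for some coordinate direction $i$ and some line $\ell$ parallel to $\mathbf e_i$, all $k+1$ consecutive points, or that the projection onto the other coordinates forces such a configuration. Setting up that stronger inductive statement correctly, and verifying the cardinality bookkeeping $k^n \to (k-1)^{n-1}\cdot(\text{something})$ goes through, is where the real work lies; once the statement is right, the verification that the chain of points is an isometric copy of $\mathcal B_k$ in $\ell_\infty$ is routine since all intermediate distances are read off directly from the single varying coordinate.
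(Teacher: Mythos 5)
Your proposal is not a proof: it is an honest account of several approaches that do not work, followed by a statement that ``the real work lies'' in a strengthened induction hypothesis you never formulate. Concretely, you try induction on $k$ and correctly observe the failure yourself: pigeonhole on the collapsing map $t\mapsto\min(t,k-1)$ (or on the fiber partition by $S(\mathbf x)=\{i:x_i=k\}$) only yields two points of $X$ at $\ell_\infty$-distance at most $1$, i.e.\ a copy of $\mathcal B_1$, and there is no visible way to iterate this to produce the $k+1$ collinear points of $\mathcal B_k$ while keeping the threshold $k^n$ aligned with $|[k-1]_0^n|=k^n$. Likewise, restricting to a common fiber $S$ lands you in $[k-1]_0^{\,n-|S|}$, where the induction hypothesis at best hands you a copy of $\mathcal B_{k-1}$, not $\mathcal B_k$, and no mechanism for extending it by one more step is supplied. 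The ``stronger inductive statement'' you gesture at is precisely the missing ingredient.

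The paper instead inducts on the dimension $n$, keeping $k$ fixed, via a compression (shifting) argument on a single coordinate. For each $\mathbf x\in X$ with tail $t(\mathbf x)\in[k]_0^{n-1}$ and head $h(\mathbf x)$ (the last coordinate), set $f(\mathbf x)=(t(\mathbf x),h(\mathbf x)+1)$ whenever some value in $[k]_0$ strictly larger than $h(\mathbf x)$ is absent from the fiber $H(t(\mathbf x))=\{j:(t(\mathbf x),j)\in X\}$, and $f(\mathbf x)=\mathbf x$ otherwise. If some fiber equals all of $[k]_0$, that fiber already gives a copy of $\mathcal B_k$; otherwise $f$ is an injection whose image has empty bottom layer, so by pigeonhole some layer $X_i$ (constant last coordinate $i\ge 1$) has $|X_i|>k^{n-1}$. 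By the induction hypothesis in dimension $n-1$, the tails of $X_i$ contain a copy of $\mathcal B_k$; the corresponding preimages in $X$ have last coordinates in $\{i-1,i\}$, hence pairwise differing by at most $1$, so all $\ell_\infty$-distances are governed by the tails and the preimages form the required copy of $\mathcal B_k$. This is a genuinely different induction variable and mechanism from the one you sketch, and it is the step your plan leaves open.
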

\begin{proof}
The proof is by induction on $n$. For $n=1$, there is nothing to prove. Indeed, if $X \subset \left[k\right]_0$ and $\left|X\right|> k$ then $X = \left[k\right]_0$ is the required copy of $\mathcal{B}_k.$

Next, assume that $n>1$. We employ a certain shifting-type argument. For a vector $\mathbf{x}=\left(x_1,\, \dots\,, x_n\right)\in \left[k\right]_0^n$ define its {\it head} $h\left( \mathbf{x} \right)=x_n$ and  {\it tail} $t(\mathbf{x})=\left(x_1,\, \dots\,, x_{n-1}\right)$. Given $\mathbf{y} \in \left[k\right]_0^{n-1}$, let $H(\mathbf{y}) = \left\{i \in [k]_0: \left(\mathbf{y}, i\right) \in X\right\}$. It should be clear that if $H(\mathbf{y}) = \left[k\right]_0$ for some $\mathbf y\in [k]_0^{n-1}$ then $\left\{ \left(\mathbf{y},0\right), \left(\mathbf{y}, 1\right),\,\dots\,,\left(\mathbf{y},k\right)\right\} \subset X$ is the required copy of $\mathcal{B}_k.$ In what follows, we assume that $H(\mathbf{y}) \neq \left[k\right]_0$ for all $\mathbf{y} \in \left[k\right]_0^{n-1}$.

Let us define a function $f:X\rightarrow\left[k\right]_0^n$ that increases the last coordinate of a vector by $1$ `whenever possible' as follows. For a vector $\mathbf x\in X$
\begin{equation*}
	f\left(\mathbf{x}\right) =
	\begin{cases}
		\left(t(\mathbf{x}),h(\mathbf x)+1\right) & \mbox{if } \exists\, j \in \left[k\right]_0\setminus H\left(t(\mathbf{x})\right) \mbox{ such that } j > h(\mathbf{x});\\
		\mathbf{x} & \mbox{otherwise.}
	\end{cases}
\end{equation*}
We show that $f(\cdot)$ is an injection. Indeed, it is clear that $f(\mathbf{x}^1)\neq f(\mathbf{x}^2)$ for all $\mathbf{x}^1, \mathbf{x}^2$ such that $t(\mathbf{x}^1) \neq t(\mathbf{x}^2)$ or $|h(\mathbf{x}^1)-h(\mathbf{x}^2)| \ge 2$. Thus, let us consider $\mathbf{y} \in [k]_0^{n-1}$, $0 \le i < k$, such that both $(\mathbf{y},i)$ and $(\mathbf{y},i+1)$ belong to $X$. It is not hard to see that either $f((\mathbf{y},i)) = (\mathbf{y},i+1), f((\mathbf{y},i+1)) = (\mathbf{y},i+2),$ or $f((\mathbf{y},i)) = (\mathbf{y},i), f((\mathbf{y},i+1)) = (\mathbf{y},i+1)$. In both cases we have $f((\mathbf{y},i))\neq f((\mathbf{y},i+1))$. Thus, $f(\cdot)$ is really an injection. Let $f\left(X\right)$ be the image of $X$ under $f(\cdot)$. Hence, 
\begin{equation} \label{B1 1}
\left|f(X)\right| = \left|X\right| > k^n.
\end{equation}

Partition $f\left(X\right)=X_0\,\sqcup\dots\,\sqcup X_{k}$ based on the last coordinate:
\begin{equation*}
X_i = \left\{f\left( \mathbf{x}\right) : \mathbf{x} \in X \mbox{ and } h\left( f\left( \mathbf{x}\right) \right) =i \right\}.
\end{equation*}
Since $H(\mathbf y)\ne [k]_0$ for all $\mathbf y\in [k]_0^{n-1}$, it is easy to see that $X_0$ is empty.
We conclude that
\begin{equation} \label{B1 2}
	\left|f(X)\right| = \left|X_1\right|+\dots+ \left|X_{k}\right|.
\end{equation}

It easily follows from comparing \eqref{B1 1} and \eqref{B1 2} that there is an $i \in \left\{1, \, \dots\,, k\right\}$ such that $\left|X_i\right|> k^{n-1}.$ Since distinct elements of $X_i$ have distinct tails, by the induction hypothesis one can find a set $\{\mathbf{y}^0,\,\dots\,,\mathbf{y}^k\}\subset\{t(\mathbf x): \mathbf x\in X_i\}$ that forms a copy of $\mathcal{B}_k$. For each $j \in [k]_0$, let $\mathbf{x}^j = f^{-1}(\mathbf{y}^j, i)\in X$ be the preimage of $(\mathbf{y}^j, i) \in X_i$. Note that we have $h(\mathbf{x}^j)\in \{i-1,i\}$ for each $j$, and thus $|h(\mathbf{x}^j ) -  h(\mathbf{x}^{j'} )|\in \{0,1\}$ for all $j,j' \in [k]_0$.  This immediately implies that $\|\mathbf{x}^j-\mathbf{x}^{j'}\|_\infty = \|\mathbf{y}^j- \mathbf{y}^{j'}\|_\infty$ for all $j,j' \in [k]_0$. Hence, the subset $\{ \mathbf{x}^0,\,\dots\,, \mathbf{x}^k\} \subset X$ is isometric to  $\mathcal{B}_k$.
\end{proof}

Theorem~\ref{B1} implies that if the coloring of $\R^n_\infty$ contains no monochromatic copy of $\mathcal{B}_k$ then each of its colors can intersect  $[k]_0^n$ in at most $k^n$  points. Using the pigeon-hole principle, we get the following corollary.

\begin{Corollary} \label{B2}
	For each positive integers $k,n$ one has $\chi(\mathbb{R}^n_\infty; \mathcal{B}_k) \geq \left(\frac{k+1}{k}\right)^n$. 
\end{Corollary}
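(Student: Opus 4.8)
The plan is to combine Theorem~\ref{B1} with a simple counting (pigeon-hole) argument applied to the finite point set $[k]_0^n\subset\mathbb R^n_\infty$, which has exactly $(k+1)^n$ elements.

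First I would fix an arbitrary coloring of $\mathbb R^n_\infty$ with $N=\chi(\mathbb R^n_\infty;\mathcal B_k)$ colors that contains no monochromatic copy of $\mathcal B_k$; such a coloring exists by definition of the chromatic number. Restricting this coloring to the grid $[k]_0^n$ partitions it into at most $N$ color classes $X_1,\dots,X_N$ (some possibly empty), with $X=[k]_0^n=\bigsqcup_{i=1}^N X_i$.

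Next I would invoke Theorem~\ref{B1}: any $X_i$ with $|X_i|>k^n$ would contain a copy of $\mathcal B_k$, and that copy would be monochromatic, contradicting the choice of the coloring. Hence $|X_i|\le k^n$ for every $i$. Summing, $(k+1)^n=|X|=\sum_{i=1}^N|X_i|\le N k^n$, so $N\ge\bigl(\tfrac{k+1}{k}\bigr)^n$, which is exactly the claimed bound.

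There is essentially no obstacle here; the only thing to be careful about is that a copy of $\mathcal B_k$ sitting inside a single color class is automatically monochromatic (by the definition of a copy as an isometric image), so Theorem~\ref{B1} is being applied exactly as intended. The content of the corollary is entirely front-loaded into Theorem~\ref{B1}; this step is just the clean-up.
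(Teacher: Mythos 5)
Your argument is exactly the paper's: restrict a proper coloring to $[k]_0^n$, use Theorem~\ref{B1} to bound each color class by $k^n$, and apply pigeonhole to get $(k+1)^n \le \chi(\mathbb R^n_\infty;\mathcal B_k)\cdot k^n$. Correct and identical in substance to the paper's one-line deduction.
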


\subsection{$\mathcal{B}\left( 1,\alpha\right)$ is exponentially $\ell_\infty-$Ramsey}

In this and the following two subsections we generalize Theorem~\ref{B1} and Corollary \ref{B2} to the case of arbitrary batons. However, the proof in the general case is nontrivial, and we wanted to illustrate some of its ideas on a much simpler case of  $\mathcal{B}\left( 1,\alpha\right)$, which is a set $\{0,1,1+\alpha\}$ with the natural metric.

\begin{Theorem} \label{B3}
	Let $\alpha>1$ be a real number. Then there is a subset $A\subset \mathbb{R}$ of cardinality $\left\lceil\alpha\right\rceil+2$ such that the following holds. Given a positive integer $n$, each subset $B \subset A^n \subset \mathbb{R}^n_{\infty}$ of cardinality $\left|B\right|>\left(\left\lceil\alpha\right\rceil+1\right)^n$ contains a copy of $\mathcal{B}\left(1,\alpha\right)$.
\end{Theorem}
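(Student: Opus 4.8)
The plan is to follow the proof of Theorem~\ref{B1} almost line by line; the one genuinely new decision is the choice of the $\lceil\alpha\rceil+2$ reals forming $A$. Set $q=\lceil\alpha\rceil\ge 2$ and take $A=\{0,1,2,\dots,q\}\cup\{1+\alpha\}$. Since by definition $q-1<\alpha\le q$, this is a set of exactly $q+2$ reals whose largest element is $1+\alpha$ and all of whose consecutive gaps are at most $1$ (the gaps inside $\{0,\dots,q\}$ equal $1$, and the top gap $1+\alpha-q$ lies in $(0,1]$). Two features of $A$ drive the argument: (i) the triple $\{0,1,1+\alpha\}$ — which is the copy of $\mathcal{B}(1,\alpha)$ we are after — consists of the two smallest elements of $A$ together with its largest one, and in particular contains $\min A=0$; and (ii) every pairwise distance inside $\mathcal{B}(1,\alpha)$ is at least $1$, hence at least every consecutive gap of $A$.

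I would then prove, by induction on $n$, that every $B\subset A^n$ with $|B|>(q+1)^n$ contains a copy of $\mathcal{B}(1,\alpha)$. For $n=1$ this is immediate, since $|B|>q+1=|A|-1$ forces $B=A\supseteq\{0,1,1+\alpha\}$. For $n\ge 2$, keep the notation $h(\mathbf{x})=x_n$, $t(\mathbf{x})=(x_1,\dots,x_{n-1})$, $H(\mathbf{y})=\{a\in A:(\mathbf{y},a)\in B\}$ of the proof of Theorem~\ref{B1}. If $H(\mathbf{y})=A$ for some $\mathbf{y}\in A^{n-1}$, then by fact (i) the points $(\mathbf{y},0),(\mathbf{y},1),(\mathbf{y},1+\alpha)$ all lie in $B$ and form a copy of $\mathcal{B}(1,\alpha)$, so we are done; hence assume $H(\mathbf{y})\ne A$ for every $\mathbf{y}$. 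Now introduce exactly the shift of Theorem~\ref{B1}, this time raising $h(\mathbf{x})$ to the \emph{next element of $A$} whenever $H(t(\mathbf{x}))$ omits an element of $A$ larger than $h(\mathbf{x})$, and fixing $\mathbf{x}$ otherwise. Repeating the argument of Theorem~\ref{B1} verbatim (with consecutive elements of $A$ in the role of $i$ and $i+1$) shows that this map $f$ is injective; and the deduction there that $X_0=\emptyset$ becomes here, again via $H(\mathbf{y})\ne A$ together with fact (i), the assertion that no point of $f(B)$ has last coordinate $0$.

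Consequently $f(B)$, partitioned by its last coordinate, has at most $q+1$ nonempty classes, so since $|f(B)|=|B|>(q+1)^n$ some class $Y$ has $|Y|>(q+1)^{n-1}$. As $f$ is injective and leaves tails unchanged, the tails of the points of $Y$ are more than $(q+1)^{n-1}$ distinct points of $A^{n-1}$, so by the induction hypothesis they contain a copy $\{\mathbf{y}^0,\mathbf{y}^1,\mathbf{y}^2\}$ of $\mathcal{B}(1,\alpha)$. Let $a\in A\setminus\{0\}$ be the common last coordinate of the points of $Y$ and put $\mathbf{x}^j=f^{-1}(\mathbf{y}^j,a)\in B$. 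Because $f$ moves a last coordinate by at most one step of $A$, each $h(\mathbf{x}^j)$ equals $a$ or the element of $A$ immediately below $a$, so the last coordinates of $\mathbf{x}^0,\mathbf{x}^1,\mathbf{x}^2$ differ pairwise by at most one consecutive gap of $A$, i.e.\ by at most $1$. By fact (ii) this does not exceed any distance among the $\mathbf{y}^j$, whence $\|\mathbf{x}^j-\mathbf{x}^{j'}\|_\infty=\|\mathbf{y}^j-\mathbf{y}^{j'}\|_\infty$ for all $j,j'$, and $\{\mathbf{x}^0,\mathbf{x}^1,\mathbf{x}^2\}\subset B$ is the required copy of $\mathcal{B}(1,\alpha)$.

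The only real obstacle is thus the choice of $A$: it must have precisely $\lceil\alpha\rceil+2$ elements, contain a copy of $\mathcal{B}(1,\alpha)$ that touches an extreme element of $A$ (so that the compressing shift empties one of the $|A|$ layers), and have all of its consecutive gaps bounded by the minimum internal distance $1$ of $\mathcal{B}(1,\alpha)$ (so that the concluding lifting step preserves all three distances). The set $A=\{0,1,\dots,\lceil\alpha\rceil\}\cup\{1+\alpha\}$ meets all three requirements, and once it is fixed the argument is essentially a transcription of the proof of Theorem~\ref{B1}.
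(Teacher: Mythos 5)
Your proof is correct, but it is organized differently from the paper's. The paper treats Theorem~\ref{B1} as a black box: it defines an order-preserving bijection $f\colon [m+1]_0^n\to A^n$ (with a specific interpolating set $A$), passes $f^{-1}(B)$ through it, invokes Theorem~\ref{B1} to find a full copy of $\mathcal{B}_{m+1}$ in the integer grid, keeps only the three points at positions $0,1,m+1$, and then verifies --- via the realization properties \eqref{B3 0} and \eqref{B3 1}, which require knowing exactly which coordinate achieves the maximum --- that their images under $f$ realize the distances $1,\alpha,1+\alpha$. You instead re-run the compression induction directly on $A^n$, so that the configuration produced by the induction hypothesis is already a copy of $\mathcal{B}(1,\alpha)$, and the lifting step becomes trivial: all consecutive gaps of $A$ are $\leq 1$, all pairwise distances in $\mathcal{B}(1,\alpha)$ are $\geq 1$, hence the last coordinate cannot affect the $\ell_\infty$ distance. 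This avoids the distance-bookkeeping entirely and is cleaner for this particular theorem; the paper's modular route pays off in Theorem~\ref{B5}, where the analogue of your set $A$ (built in Lemma~\ref{L2} via Dirichlet approximation) is the only delicate ingredient and one does not want to re-derive the compression argument alongside it. Your choice $A=\{0,1,\dots,\lceil\alpha\rceil\}\cup\{1+\alpha\}$ is also simpler than the paper's equally-spaced interpolation $a_l=1+\tfrac{l-1}{m-1}(\alpha-1)$, and both have the two properties your argument actually uses: $\{0,1,1+\alpha\}\subset A$ (needed for the base case and the full-fiber case) and all consecutive gaps of $A$ at most $1$ (needed for lifting). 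One small remark on your motivating discussion: the compression always empties the bottom layer regardless of whether the target configuration contains $\min A$, so the parenthetical ``so that the compressing shift empties one of the $|A|$ layers'' attached to fact~(i) is not really the reason $\min A\in\{0,1,1+\alpha\}$ matters; what you actually need from~(i) is simply that $\{0,1,1+\alpha\}\subset A$. This does not affect the correctness of the proof.
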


\begin{proof}
Denote $m=\left\lceil\alpha\right\rceil$. % and note that  $m < \alpha \leq m+1$. 
Consider $A = \left\{a_0,\,\dots\,, a_{m+1}\right\} \subset \mathbb{R},$ where $a_l$ are defined as follows:
\begin{equation*}
a_0 = 0, \ \ a_l = 1+\frac{l-1}{m-1}\left(\alpha-1\right) \mbox{ for } 1 \leq l \leq m,
\ \ a_{m+1} = \alpha+1.
\end{equation*}
In particular, $a_1 = 1,$ $a_{m} = \alpha$. Define a bijection $f: [m+1]_0^n\to A^n$ by $f((x_1, \,\ldots\,,x_n)) = (a_{x_1}, \,\dots\,, a_{x_n})$. 

Let $B\subset A^n$ be an arbitrary subset of cardinality $\left|B\right|>\left(m+1\right)^n$. From Theorem \ref{B1} it follows that there is a subset $\left\{\mathbf{x}^0,\,\dots\,,\mathbf{x}^{m+1}\right\} \subset f^{-1}(B) \subset \left[m+1\right]_0^n$ that is a copy of $\mathcal{B}_{m+1}$. Without loss of generality, we can assume that $\|\mathbf{x}^s- \mathbf{x}^t\|_\infty = \left|s-t\right|$ for all $s,t \in \left[m+1\right]_0.$ In particular, it is easy to see that $\left\{\mathbf{x}^0, \mathbf{x}^1, \mathbf{x}^{m+1}\right\}$ is a copy of $\mathcal{B}\left(1,m\right)$. For convenience, let us denote $\mathbf{x}^0, \mathbf{x}^1,$ and $\mathbf{x}^{m+1}$ by $\mathbf{x}, \mathbf{y}$, and $\mathbf{z},$ respectively. Then the following two statements hold:

\begin{equation} \label{B3 0}
\mbox{for all } i \in \{1,\,\dots\,,n\} \mbox{ one has }
\begin{cases}
	\left|x_i-y_i\right| \leq 1, \\
	\left|y_i-z_i\right| \leq m;
\end{cases}
\end{equation}
\begin{equation} \label{B3 1}
\mbox{there is } j \in \{1,\,\dots\,,n\} \mbox{ such that either }
\begin{cases}
	x_j=0,\\
	y_j=1,\\
	z_j=m+1,
\end{cases}
\mbox{ or }
\begin{cases}
x_j=m+1,\\
y_j=m,\\
z_j=0.
\end{cases}
\end{equation}
The first statement is straightforward from the definition of $\mathcal B(1,m)$. As for the second, the equality $\|\mathbf{x}- \mathbf{z}\|_\infty = m+1$ implies that there is $j \in \{1,\,\dots\,,n\}$ such that $\left|x_j-z_j\right| = m+1$, and thus either  $x_j=0$ and $z_j=m+1$, or $x_j=m+1$ and $z_j=0$. Then it follows form \eqref{B3 0} that $y_j = 1$ in the former case and $y_j=m$ in the latter.

We claim that $\{f(\mathbf{x}), f(\mathbf{y}), f(\mathbf{z})\} \subset B$ is a copy of $\mathcal{B}\left(1,\alpha\right)$.  To check that, we need to verify that the distances between $f(\mathbf x), f(\mathbf y), f(\mathbf z)$ are the same as the distances between points in $\mathcal B(1,\alpha)$. 

Clearly, $\|\mathbf a^1-\mathbf a^2\|_\infty\leq \alpha+1$ for all $\mathbf a^1,\mathbf a^2\in A^n$. At the same time, it follows form \eqref{B3 1} that $|f(\mathbf x)_j-f(\mathbf z)_j|= |a_{x_j}-a_{z_j}| = \alpha+1,$ and thus $\|f(\mathbf x)-f(\mathbf z)\|_\infty=\alpha+1.$ 

Similarly, $|f(\mathbf x)_j-f(\mathbf y)_j| = 1$ and $|f(\mathbf y)_j-f(\mathbf z)_j| = \alpha,$ implying $\|f(\mathbf x)-f(\mathbf y)\|_\infty\geq 1$ and $\|f(\mathbf y)-f(\mathbf z)\|_\infty\geq \alpha.$ We actually have equality in both of these inequalities. Let us show it for the former, and the latter is analogous. Indeed, if $\|f(\mathbf x)-f(\mathbf y)\|_\infty> 1$  then there is $j' \in \{1,\,\dots\,,n\}$ such that $|f(\mathbf x)_{j'}-f(\mathbf y)_{j'}| > 1$. It follows from the definition of $a_l$ that if $|a_l-a_r| > 1$, then $|l-r|\geq 2$. Hence, $|x_{j'}-y_{j'}|\geq 2$, which contradicts \eqref{B3 0}.
\end{proof}

Observe that now one can easily deduce from Theorem~\ref{B3} that for all $\alpha>1$ and $n \in \mathbb{N}$, we have $\chi(\R_\infty^n; \mathcal{B}(1,\alpha)) \ge \left(\frac{\lceil \alpha \rceil+2}{ \lceil \alpha \rceil+1}\right)^n$. In particular, this implies that the metric space $\mathcal{B}(1,\alpha)$ is exponentially $\ell_\infty-$Ramsey.

\subsection{$\mathcal{B}\left( \alpha_1,\, \dots \, , \alpha_k\right)$  is exponentially $\ell_\infty-$Ramsey}

In this subsection we deal with the general case of   $\mathcal{B}\left( \alpha_1,\, \dots \, , \alpha_k\right)$. 
We use the same idea of reduction to the integer case and applying the pigeonhole principle. In the notation of the previous subsection, the main difficulty here is to find an appropriate $A$ and bijection $f$.

\begin{Theorem} \label{B5}
	Let $k$ be a positive integer and $\alpha_1,\, \dots \, , \alpha_k$ be positive real numbers. Then there is an integer $m$ and a subset $A\subset \mathbb{R}$ of cardinality $m+1$ such that the following holds. Given a positive integer $n$, each subset $B \subset A^n \subset \mathbb{R}^n_\infty$ of cardinality $\left|B\right|>m^n$
	contains a copy of $\mathcal{B}\left( \alpha_1,\, \dots \, , \alpha_k\right)$.
\end{Theorem}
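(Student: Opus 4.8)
The plan is to reduce the general baton case to the integer case (Theorem~\ref{B1}) exactly as in the proof of Theorem~\ref{B3}, by finding a suitable finite set $A = \{a_0, \dots, a_m\} \subset \mathbb{R}$ together with the bijection $f\colon [m]_0^n \to A^n$ acting coordinatewise via $a_\ell$. Given any subset $B \subset A^n$ with $|B| > m^n$, Theorem~\ref{B1} applied to $f^{-1}(B)$ produces a copy of $\mathcal{B}_m$, i.e.\ points $\mathbf{x}^0, \dots, \mathbf{x}^m$ with $\|\mathbf{x}^s - \mathbf{x}^t\|_\infty = |s-t|$; I then want to select a sub-collection of these $m+1$ points whose images under $f$ form a copy of $\mathcal{B}(\alpha_1, \dots, \alpha_k)$. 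The key design requirement is that $A$ should contain "scaled" partial sums of the $\alpha_i$ positioned inside $[0, m]$ so that the prescribed distances $\alpha_1, \alpha_1+\alpha_2, \dots, \sum \alpha_i$ are realized, while no unwanted coordinate can blow a distance up beyond what it should be.

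The main obstacle — and the reason this is harder than the $\mathcal{B}(1,\alpha)$ case — is the interplay between two constraints on $A$. On one hand, I need the total length $\sum_{i=1}^k \alpha_i$ (and each partial sum) to appear as a difference $a_{s} - a_{t}$ with $|s-t|$ not too large, because the $\mathbf{x}^j$ come from a copy of $\mathcal{B}_m$ and are only guaranteed to be spread over an interval of integer length $m$. On the other hand I need the monotone spacing of $A$ to be "Lipschitz-compatible" with the integer metric: whenever $|a_\ell - a_r|$ exceeds one of the target distances $d := \alpha_1 + \dots + \alpha_j$, the index gap $|\ell - r|$ must be large enough that the corresponding coordinate cannot occur in a too-short baton — this is what forces equality (rather than just $\ge$) in the distance computations, as in the final paragraph of the proof of Theorem~\ref{B3}. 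Balancing these forces the choice of $m$; I expect $m$ to be something like a common denominator of a rational approximation of the $\alpha_i$ together with $\lceil \sum \alpha_i \rceil$, and the construction of $A$ to interleave a fine arithmetic-progression "scaffold" of mesh $1$ (to control distances from above) with the specific points $0 = c_0 < c_1 < \dots < c_k$ that are scaled copies of the partial sums of the $\alpha_i$.

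Concretely, I would first fix a large integer $N$ and rational numbers $\beta_i = p_i/N$ very close to $\alpha_i$ (exact rationality is available since we may perturb: but in fact one should handle the $\alpha_i$ exactly, so instead I would take $A$ to consist of all points $j/D$ for $j$ in a suitable range, where $D$ is chosen so that each partial sum $\sigma_j = \alpha_1 + \dots + \alpha_j$ — after a global rescaling — lands on the grid; if the $\alpha_i$ are irrational one rescales the whole configuration so that this is possible, which is fine because batons are defined up to isometry only via their gap sequence, and we may also just prove the statement with $A$ adapted to the given $\alpha_i$ without rescaling, placing on the real line the points $0, 1, \dots$ refined near the needed partial sums). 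Then I set $m = $ (the number of grid steps needed to reach $\sigma_k$) and verify the two properties, mirroring \eqref{B3 0} and \eqref{B3 1}: an upper bound $\|f(\mathbf{a}^1) - f(\mathbf{a}^2)\|_\infty \le \sigma_k$ for all $\mathbf{a}^1, \mathbf{a}^2 \in A^n$, and the existence of a coordinate $j$ realizing the extreme value, from which the intermediate distances $\sigma_1, \dots, \sigma_{k-1}$ follow in that same coordinate. Finally, picking from $\{\mathbf{x}^0, \dots, \mathbf{x}^m\}$ the $k+1$ indices that correspond (via the $a_\ell$ labeling) to $c_0, \dots, c_k$, the Lipschitz property of $A$ upgrades all the "$\ge$" distance bounds to equalities, so $\{f(\mathbf{x}^{j_0}), \dots, f(\mathbf{x}^{j_k})\}$ is the desired copy of $\mathcal{B}(\alpha_1, \dots, \alpha_k)$. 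The pigeonhole corollary $\chi(\mathbb{R}^n_\infty; \mathcal{B}(\alpha_1,\dots,\alpha_k)) \ge \left(\frac{m+1}{m}\right)^n$, and hence exponential $\ell_\infty$-Ramseyness, then follows as before.
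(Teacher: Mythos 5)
Your overall architecture --- reduce to Theorem~\ref{B1} via a coordinatewise bijection $f\colon [m]_0^n\to A^n$, extract a copy of $\mathcal{B}_m$ in $f^{-1}(B)$, select the $k+1$ indices carrying the partial sums $\sigma_j=\alpha_1+\dots+\alpha_j$, and conclude by combining one ``diagonal'' coordinate with a Lipschitz-type spacing bound --- is exactly the paper's. You also correctly identify the tension between landing the $\sigma_j$ on $A$ and keeping $A$ compressed enough that no off-diagonal coordinate can overshoot. But the construction of $A$ is the whole content of the theorem (it is Lemma~\ref{L2} in the paper), and you do not carry it out; of the two routes you float, one is simply wrong. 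Rescaling ``the whole configuration'' cannot make irrational, rationally independent gaps land on a common grid $\{j/D\}$: for $\alpha_1=1$, $\alpha_2=\sqrt{2}$ no scalar $c$ makes both $c\alpha_1$ and $c\alpha_2$ rational, and in any case $\mathcal{B}(c\alpha_1,\dots,c\alpha_k)$ is not isometric to $\mathcal{B}(\alpha_1,\dots,\alpha_k)$ for $c\neq 1$, so rescaling changes the forbidden configuration.

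Your other sketch --- an integer scaffold with the $\sigma_j$ inserted --- breaks the very Lipschitz requirement you flagged. Take $\alpha_1=1$, $\alpha_2=\pi$ and try $A=\{0,1,2,3,4,1+\pi\}$, $m=5$, with $\sigma_1=1$ at index $1$ and $\sigma_2=1+\pi$ at index $5$, so the selected sub-baton has gaps $p_1=1$, $p_2=4$. In an off-diagonal coordinate $j'$ of the extracted $\mathcal{B}_5$ you may have $y^1_{j'}=0$ and $y^2_{j'}=4$, which gives $|f(\mathbf{y}^1)_{j'}-f(\mathbf{y}^2)_{j'}|=a_4-a_0=4>\pi$, so $\{f(\mathbf{y}^0),f(\mathbf{y}^1),f(\mathbf{y}^2)\}$ is not a copy of $\mathcal{B}(1,\pi)$. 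To kill this you would need $a_{p_2}\leq\alpha_2$; more generally, $A$ must satisfy the subadditivity $a_{l+r}\leq a_l+a_r$ together with $a_{d_1p_1+\dots+d_kp_k}=d_1\alpha_1+\dots+d_k\alpha_k$ for every admissible integer combination, and the latter forces the index gaps $p_i$ to be nearly proportional to the $\alpha_i$. That is why the paper invokes Dirichlet's simultaneous approximation theorem to choose $q,p_1,\dots,p_k$ with $|\alpha_i-p_i/q|<q^{-(1+1/k)}$, proves the rounding map $c(\gamma)=\lfloor q\gamma\rceil$ is strictly increasing and additive on the finite set $\Gamma$ of relevant combinations, and then builds $a_0<\dots<a_m$ by a short affine interpolation between the values $\gamma_i$ placed at indices $c(\gamma_i)$. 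Nothing in your sketch produces (or replaces) this mechanism.
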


First, we suppose that there exists an `appropriate' finite subset $A =\{a_0, \, \ldots \, ,a_m\} \subset \mathbb{R}$ that satisfies several conditions and deduce  Theorem \ref{B5} using it. Then we prove the existence of this `appropriate' $A$.

\begin{Lemma} \label{L2}
	Let $k$ be a positive integer and $\alpha_1,\, \dots \, , \alpha_k$ be positive real numbers. Set
	\begin{equation*}
		\Gamma = \left\{\gamma : \gamma \leq \alpha_1+\dots+ \alpha_k \mbox{ and } \gamma = d_1\alpha_1+\dots+ d_k\alpha_k \mbox{ for some } d_1,\,\dots\,, d_k \in \mathbb{N}\cup\left\{0\right\} \right\}.
	\end{equation*}
	Then there are positive integers $p_1,\, \dots \, , p_k$ and a sequence of real numbers $a_0 < \dots < a_m$, where $m=p_1+\dots+p_k$, such that the following two statements hold. First, for all positive integers $l$ and $r$ such that $l+r \leq m$ one has
	\begin{equation} \label{L2 1}
	a_{l+r} \leq a_l + a_r.
	\end{equation}
	Second, for each $\gamma = d_1\alpha_1+\dots+ d_k\alpha_k \in \Gamma$ one has $d_1p_1+\dots+ d_kp_k \leq m$ and
	\begin{equation} \label{L2 2}
	a_{d_1p_1+\dots+ d_kp_k} = d_1\alpha_1+\dots+ d_k\alpha_k.
	\end{equation}
\end{Lemma}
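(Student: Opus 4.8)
The plan is to build the sequence $a_0 < \dots < a_m$ so that certain indices are "pinned" to the values forced by \eqref{L2 2}, and all remaining indices are filled in by linear interpolation; then verify the subadditivity \eqref{L2 1}. The first step is to choose the $p_i$. Since the $\alpha_i$ may be incommensurable, the set $\Gamma$ is finite (being bounded and consisting of nonnegative-integer combinations of the $\alpha_i$ bounded by $\sum\alpha_i$), so I would let $\delta>0$ be small enough that all pairwise differences among the finitely many reals in $\Gamma \cup \{0\}$ exceed $2\delta$, and in fact choose a common "scale": pick a positive integer $N$ and set $p_i = \lceil \alpha_i / \varepsilon \rceil$ for a suitable $\varepsilon>0$, or — cleaner — first dilate so that we may search for $\varepsilon$ making the assignment $\gamma = d_1\alpha_1+\dots+d_k\alpha_k \mapsto d_1p_1+\dots+d_kp_k$ injective on $\Gamma$ with $p_i \approx \alpha_i/\varepsilon$. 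The key point driving the choice of $\varepsilon$ is that the map $(d_1,\dots,d_k)\mapsto \sum d_i p_i$ must be consistent: if two tuples give the same $\gamma\in\Gamma$ they need not give the same index (they could), but if they give different $\gamma$ they must give different indices, and moreover the index $\sum d_ip_i$ must be an increasing function of $\gamma$ along $\Gamma$. Taking $\varepsilon$ irrationally-generic (or simply small enough, using that $\Gamma$ is finite) guarantees $\sum d_i p_i$ and $\gamma$ are comonotone on $\Gamma$; set $m = p_1+\dots+p_k$, which is $\geq \sum d_i p_i$ for every $\gamma\in\Gamma$ since each $d_i\le$ the maximal multiplicity, in particular $d_i \le$ the number of times $\alpha_i$ fits, so $\sum d_ip_i \le \sum p_i = m$.

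Next, with the $p_i$ fixed, I would define $a_j$ for the "pinned" indices $j = \sum d_i p_i$ (over $\gamma\in\Gamma$) to equal $\sum d_i\alpha_i = \gamma$, and then define $a_j$ on every maximal gap between consecutive pinned indices $j_0 < j_1$ by linear interpolation between $a_{j_0}$ and $a_{j_1}$, with slope $(a_{j_1}-a_{j_0})/(j_1-j_0)$; for indices beyond the last pinned index (if any), extend with the last used slope, or just ensure $a_m = \sum p_i \cdot(\text{something})$ — in fact $m$ itself is pinned since $\alpha_1+\dots+\alpha_k\in\Gamma$ with all $d_i=1$ maps to index $m$, so $a_m = \alpha_1+\dots+\alpha_k$ automatically and there is no overhang. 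This makes $a_0=0$ (the empty combination), the sequence strictly increasing provided each interpolation slope is positive (which holds because $\gamma$ is increasing in the index along $\Gamma$), and \eqref{L2 2} holds by construction.

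The main obstacle is verifying the subadditivity \eqref{L2 1}: $a_{l+r}\le a_l+a_r$ for all positive $l,r$ with $l+r\le m$. For this I would argue that the interpolation construction produces a sequence that is concave in a discrete sense, or at least that the "upper envelope" determined by the pinned points is subadditive. Concretely: since every pinned value equals $\gamma\in\Gamma\subset[0,\sum\alpha_i]$ and $\Gamma$ is closed under addition as long as the sum stays $\le\sum\alpha_i$ (if $\gamma,\gamma'\in\Gamma$ and $\gamma+\gamma'\le\sum\alpha_i$ then $\gamma+\gamma'\in\Gamma$, being again a nonnegative integer combination), the pinned subsequence is itself subadditive; and linear interpolation between subadditive "control points" preserves subadditivity provided the slopes are nonincreasing across successive gaps — which I would arrange, or deduce, from the fact that adding more copies of the $\alpha_i$ can only keep the average increment $\le$ the initial increments. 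So the plan for \eqref{L2 1} is: (i) prove it when $l,r$ are both pinned indices using closure of $\Gamma$ under bounded addition; (ii) prove that the slopes of the interpolation are nonincreasing in the index, hence the piecewise-linear interpolant lies below the "cone" $a_l + (\text{slope at } l)\cdot r$; (iii) combine to get the general inequality. Step (ii) is where I expect to spend real effort: I would likely need to choose $\varepsilon$ not merely small but so that the pinned points, listed in index order, have nonincreasing successive slopes $(\gamma'-\gamma)/(\text{index}'-\text{index})$, which is a finite list of inequalities and can be secured by a careful (possibly perturbative) choice of the $p_i$; if a direct choice is awkward, an alternative is to allow the $a_j$ on gaps to be defined by a concave interpolation (e.g. taking $a_j$ to be the minimum over all pinned pairs straddling $j$ of the corresponding linear interpolant), which is automatically subadditive, and then check it still passes through all pinned points — which it does iff the pinned points are in "concave position", returning us to condition (ii). I would present the concave-interpolation version as the clean route and relegate the genericity-of-$\varepsilon$ bookkeeping to a short paragraph.
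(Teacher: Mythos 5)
Your high-level plan---approximate the $\alpha_i$ by rationals $p_i/q$ to get integer indices $\sum d_i p_i$, pin $a_{\sum d_i p_i}$ to $\sum d_i\alpha_i$ for $\gamma\in\Gamma$, fill in the gaps, and then exploit the closure of $\Gamma$ under bounded addition to get \eqref{L2 1}---does match the paper's strategy, which makes the approximation step rigorous via Dirichlet's simultaneous approximation theorem (and proves comonotonicity and ``linearity'' of the index map as two propositions). The closure observation for $\Gamma$ you state is also exactly what the paper uses.

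The genuine gap is the interpolation scheme. Both variants you propose---linear interpolation between consecutive pins, or the concave lower envelope---require the pinned points $\bigl(\sum d_ip_i,\,\sum d_i\alpha_i\bigr)$ to lie in concave position (nonincreasing chord slopes), and you correctly note that the concave-envelope route passes through the pins only under that condition. But this condition is unattainable in general, for any choice of the $p_i$. Take $k=2$, $\alpha_1=1$, $\alpha_2=\sqrt2$; then $\Gamma=\{0,\,1,\,\sqrt2,\,2,\,1+\sqrt2\}$ with pinned indices $0<p_1<p_2<2p_1<p_1+p_2$ and pinned values $0,1,\sqrt2,2,1+\sqrt2$. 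The slope over $[p_1,p_2]$ and the slope over $[2p_1,\,p_1+p_2]$ are both $(\sqrt2-1)/(p_2-p_1)$, so nonincreasing slopes would force the slope over $[p_2,2p_1]$ to equal $(\sqrt2-1)/(p_2-p_1)$ as well, which after simplifying reduces to $p_2=\sqrt2\,p_1$---impossible for integers. And if you just interpolate linearly anyway, subadditivity fails concretely: with $p_1=5$, $p_2=7$ one gets $a_1=1/5$ and $a_{11}=2+(\sqrt2-1)/2$, so $a_1+a_{11}\approx 2.407 < 1+\sqrt2 = a_{12}$, violating \eqref{L2 1}. So step (ii) of your plan is not a bookkeeping issue to be ``secured by a careful choice of the $p_i$''; it cannot be secured.

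The paper sidesteps this entirely by \emph{not} interpolating smoothly. Within each gap $c(\gamma_{i-1})< l \le c(\gamma_i)$ it sets $a_l=\gamma_i-\tfrac{c(\gamma_i)-l}{2m}\,\delta$, where $\delta$ is a uniform lower bound on the gaps $\gamma_i-\gamma_{i-1}$ (including one extra $\gamma_{t+1}$ beyond $\Gamma$). So the sequence climbs with a tiny slope $\delta/(2m)$ until it hits the pin $\gamma_i$ exactly, then jumps by at least $\delta/2$ into the next gap; every $a_l$ in a gap sits within $\delta/2$ of its upper pin $\gamma_i$. Subadditivity is then proved by locating $l,r,l+r$ in gaps with upper pins $\gamma_i,\gamma_{i'},\gamma_j$, showing $\gamma_i+\gamma_{i'}\ge\gamma_j$ (using closure of $\Gamma$ and the strict monotonicity of $c$), and handling equality exactly and strict inequality by observing the $\delta$-corrections total less than $\gamma_{j+1}-\gamma_j$. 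This staircase-plus-tiny-ramp construction is the idea missing from your proposal.
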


\begin{proof}[Proof of Theorem~\ref{B5}]
This proof essentially repeats the  argument from the previous subsection. Take $p_1, \,\ldots\, , p_k, m,$ and $a_0<\ldots<a_m$ as in Lemma~\ref{L2}. Set $A = \{a_0, \,\ldots\,, a_m\}$. Define a bijection $a: [m]_0 \to A$ by $a(l) = a_l$ for all $l \in [m]_0$. This function is strictly increasing by construction. Given $n \geq 1$, let $f: [m]_0^n\to A^n$ be a bijection defined by $f((x_1,\,\ldots\,,x_n)) = (a_{x_1}, \,\dots\,, a_{x_n})$. Note that we have $f(\mathbf{x})_i = a(x_i) = a_{x_i}$ for all $\mathbf{x} = (x_1,\,\ldots\,,x_n) \in [m]_0^n, 1 \leq i \leq n$.

Let $B \subset A^n$ be an arbitrary subset of cardinality $\left|B\right|>m^n$. From Theorem \ref{B1} it follows that there is a subset $\{\mathbf{x}^0,\,\dots\,,\mathbf{x}^{m}\} \subset f^{-1}(B) \subset \left[m\right]_0^n$ that is a copy of $\mathcal{B}_{m}$. Without loss of generality, we can assume that $\|\mathbf{x}^s- \mathbf{x}^t\|_\infty = \left|s-t\right|$ for all $s,t \in \left[m\right]_0.$ In particular, it is easy to see that $\{\mathbf{x}^0, \mathbf{x}^{p_1}, \mathbf{x}^{p_1+p_2},\, \dots \,, \mathbf{x}^{ p_1+p_2+\dots+p_k}\}$ is a copy of $\mathcal{B}\left( p_1,\, \dots \, , p_k\right)$. For convenience,  let us denote  $\mathbf{x}^0, \mathbf{x}^{p_1}, \mathbf{x}^{p_1+p_2},\, \dots \,, \mathbf{x}^{ p_1+p_2+\dots+p_k}$ by $\mathbf{y}^0, \mathbf{y}^{1}, \, \dots \,, \mathbf{y}^{k}$, respectively. They satisfy the following two properties:
\begin{equation} \label{L1 3}
\mbox{for all } 0 \leq s<t\leq k \mbox{ and for all } i \in \{1,\,\dots\,,n\} \mbox{ one has } \left|y_{i}^s-y_{i}^t\right| \leq p_{s+1}+\dots+p_{t},
\end{equation}
\begin{equation} \label{L1 4}
\mbox{there is } j \in \{1,\,\dots\,,n\} \mbox{ s.t. either }
\begin{cases}
y_{j}^0=0,\\
y_{j}^1=p_1,\\
\dots\\
y_{j}^k=p_1+\dots+p_k,
\end{cases}
\mbox{ or }
\begin{cases}
y_{j}^0=p_1+\dots+p_k,\\
y_{j}^1=p_1+\dots+p_{k-1},\\
\dots\\
y_{j}^k=0.
\end{cases}
\end{equation}
The first property immediately follows from $\|\mathbf y^s-\mathbf y^t\|_{\infty} = p_{s+1}+\dots+p_{t}$. As for the second, given that $\|\mathbf y^0-\mathbf y^k\|_{\infty} = m$ and $\mathbf y^0, \mathbf y^k \in [m]_0^n,$ we must have a $j$ such that either $y^0_j=0$ and $y^k_j=m,$ or $y^0_j=m$ and $y^k_j=0.$ In the rest of this subsection we assume that the former holds (the other case is symmetric). Then for each $1\leq s\leq k-1,$ given the distances $\|\mathbf y^0-\mathbf y^s\|_{\infty} = p_1+\ldots+p_s,$ $\|\mathbf y^s-\mathbf y^k\|_{\infty} = p_{s+1}+\ldots+p_k = m-\|\mathbf y^0-\mathbf y^s\|_{\infty},$ we clearly must have $y^s_j = p_1+\ldots+p_s.$

Using these two properties, we conclude the proof of Theorem~\ref{B5} by showing that the set $\{f(\mathbf{y}^0), f(\mathbf{y}^{1}), \, \dots \,, f(\mathbf{y}^{k})\} \subset B$ is a copy of  $\mathcal{B}\left( \alpha_1,\, \dots \, , \alpha_k\right)$.

On the one hand, given $0 \leq s<t\leq k$, we use \eqref{L2 2} and \eqref{L1 4} to get that $f(\mathbf{y}^s)_j = \alpha_1+\ldots+\alpha_s$ and $f(\mathbf{y}^t)_j = \alpha_{1}+\ldots+\alpha_t$. This implies that
\begin{equation} \label{B5 1}
	|f(\mathbf{y}^s)_j-f(\mathbf{y}^t)_j| = \alpha_{s+1}+\ldots+ \alpha_t.
\end{equation}

On the other hand, fix any $j'\in \{1, \,\dots\,, n\}$ and w.l.o.g. assume that $y^s_{j'}\geq y^t_{j'}.$ Using the monotonicity of $a(\cdot)$ and \eqref{L2 1}, we get that
\begin{equation} \label{B5 2}
0 \leq f(\mathbf{y}^s)_{j'} - f(\mathbf{y}^t)_{j'} = a(y^s_{j'}) - a(y^t_{j'}) = a(y^s_{j'}- y^t_{j'} +y^t_{j'}) - a(y^t_{j'}) \leq a(y^s_{j'}-y^t_{j'}).
\end{equation}
It follows from \eqref{L1 3} that $y^s_{j'}- y^t_{j'} \leq p_{s+1}+\dots+p_{t}$. Using the monotonicity of $a(\cdot)$ and \eqref{L2 2}, we get that
\begin{equation} \label{B5 3}
 a(y^s_{j'}-y^t_{j'}) \leq a(p_{s+1}+\dots+p_{t}) = \alpha_{s+1}+\dots+\alpha_t.
\end{equation}
We substitute \eqref{B5 3} in \eqref{B5 2} and get that $|f(\mathbf{y}^s)_{j'} - f(\mathbf{y}^t)_{j'}| \leq \alpha_{s+1}+\dots+\alpha_t$. Together with  \eqref{B5 1} this gives $\|f(\mathbf{y}^s)-f(\mathbf{y}^t)\|_{\infty} = \alpha_{s+1}+\ldots +\alpha_t$ for all $0 \leq s<t\leq k$. This implies that $\{f(\mathbf{y}^0), f(\mathbf{y}^{1}), \, \dots \,, f(\mathbf{y}^{k})\} \subset B$ is indeed a copy of  $\mathcal{B}\left( \alpha_1,\, \dots \, , \alpha_k\right)$.\end{proof}

It only remains  to prove Lemma \ref{L2} in order to finish the proof of Theorem \ref{B5}. We do this in a separate subsection. 

\subsection{The proof of Lemma \ref{L2}}

Suppose that $\Gamma = \{\gamma_0, \,\ldots\,, \gamma_t\}$, where $\gamma_0 < \dots < \gamma_t$. In particular, $\gamma_0 = 0, \gamma_1 = \min_{1\leq i \leq k} \{\alpha_i\}$, and $\gamma_t = \alpha_1+\dots+\alpha_k$. Let $\gamma_{t+1}$ be the smallest linear combination of $\alpha_1,\, \dots \, , \alpha_k$ with nonnegative integer coefficients that is greater than $\gamma_t$, i.e.,
\begin{equation*}
	\gamma_{t+1} = \min\big\{\gamma: \gamma > \gamma_t \mbox{ and } \gamma = d_1\alpha_1+\dots+d_k\alpha_k \mbox{ for some } d_1, \,\ldots\, , d_k \in \mathbb{N}\cup\{0\}\big\}.
\end{equation*}

Put $\delta = \min_{1\leq i \leq t+1}\{\gamma_{i}-\gamma_{i-1}\}$ and $\theta = \gamma_{t}/\gamma_1$. Let $q_0$ be a large enough integer such that
\begin{equation*}
	\frac{1}{q_0} < \delta \ \ \text{and} \ \ \frac{\theta}{q_0^{1+1/k}} < \frac{1}{2q_0}.
\end{equation*}

We will apply the following result of Dirichlet on Diophantine approximations (see, e.g., \cite{Schmidt1996}, Section~2, Theorem~1A).

\begin{Theorem} \label{Dir}
	Given $\alpha_1',\, \dots \, , \alpha_k'\in\mathbb R$ and $q_0'\in \mathbb N$, there is an integer $q > q_0'$ such that the following holds. There are  $p_1,\, \dots \, , p_k \in \mathbb{Z}$ such that for each $1 \leq i \leq k$ one has
	\begin{equation*}
	\left|\alpha_i'-\frac{p_i}{q}\right| < \frac{1}{q^{1+1/k}}.
	\end{equation*}
\end{Theorem}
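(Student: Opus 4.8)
\textbf{Plan for the proof of Theorem~\ref{Dir} (Dirichlet's simultaneous approximation theorem).}
The plan is to use the pigeonhole principle in its classical form, exactly as Dirichlet did. Fix $\alpha_1',\dots,\alpha_k'\in\R$ and $q_0'\in\N$. I want to produce, for a suitable integer $q>q_0'$, integers $p_1,\dots,p_k$ with $\left|\alpha_i'-p_i/q\right|<q^{-1-1/k}$ for all $i$; equivalently, multiplying through by $q$, I want $\left|q\alpha_i'-p_i\right|<q^{-1/k}$. So the real goal is to find an integer $q$ in a controlled range such that all $k$ numbers $q\alpha_1',\dots,q\alpha_k'$ are simultaneously close to integers.

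First I would pick a large integer parameter $N$ (to be related to $q_0'$ at the end) and consider the $N+1$ points
\begin{equation*}
P_j=\big(\{j\alpha_1'\},\{j\alpha_2'\},\dots,\{j\alpha_k'\}\big)\in[0,1)^k,\qquad j=0,1,\dots,N,
\end{equation*}
where $\{\cdot\}$ denotes fractional part. Next I would partition the cube $[0,1)^k$ into $N^k$ subcubes of side $1/N$. Since there are $N+1>N^k$ points $P_j$ but only $N^k$ subcubes — wait, this needs $N+1>N^k$, which fails for $k\ge 2$; the correct setup uses $N^k$ boxes and $N^k+1$ points, so I would instead take $j=0,1,\dots,N^k$, giving $N^k+1$ points among $N^k$ boxes. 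By the pigeonhole principle two of them, say $P_{j_1}$ and $P_{j_2}$ with $0\le j_1<j_2\le N^k$, lie in the same subcube, so $\left|\{j_2\alpha_i'\}-\{j_1\alpha_i'\}\right|<1/N$ for every $i$. Setting $q=j_2-j_1$ (so $1\le q\le N^k$) and letting $p_i$ be the nearest integer to $q\alpha_i'$, one gets $\left|q\alpha_i'-p_i\right|\le\left|\{j_2\alpha_i'\}-\{j_1\alpha_i'\}\right|<1/N$ for all $i$ (using $q\alpha_i'=j_2\alpha_i'-j_1\alpha_i'$ and that the integer parts cancel appropriately up to the choice of $p_i$). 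Since $q\le N^k$ we have $N\ge q^{1/k}$, hence $\left|q\alpha_i'-p_i\right|<1/N\le q^{-1/k}$, which is exactly $\left|\alpha_i'-p_i/q\right|<q^{-1-1/k}$.

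Finally I would address the requirement $q>q_0'$. The construction above only guarantees $q\ge 1$. To force $q$ large, I would note that the argument produces, for each choice of $N$, some $q=q(N)\le N^k$ with the stated approximation property, and the \emph{strict} inequality $\left|\alpha_i'-p_i/q\right|<q^{-1-1/k}$ means that, for a fixed $q$, only finitely many $(p_1,\dots,p_k)$ can work and the inequality is strict, so a single $q$ cannot be returned for arbitrarily large $N$ unless all $\alpha_i'$ are rational with denominator dividing $q$ — and even in that degenerate case one can take $q$ to be any large multiple of that denominator, making all the errors zero. Concretely: if some $\alpha_i'$ is irrational, then for any bound $Q$ there is $N$ large enough that the $q$ produced exceeds $Q$ (otherwise infinitely many distinct pairs $j_1<j_2$ would give the same small $q$ with fractional parts within $1/N\to 0$, forcing $\{q\alpha_i'\}=0$, a contradiction); if all $\alpha_i'$ are rational, pick $q$ a large common-denominator multiple directly. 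Either way, choosing $N$ large enough (depending on $q_0'$) yields $q>q_0'$, completing the proof.

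The main obstacle is the bookkeeping at the last step — converting the pigeonhole output, which a priori only gives \emph{some} positive $q$, into one that is genuinely larger than the prescribed $q_0'$, while keeping the bound $q\le N^k$ so that $1/N\le q^{-1/k}$ still holds. The cleanest route is probably to run the pigeonhole argument on the shifted index set $\{0,1,\dots,N^k\}$ and then separately rule out the possibility of a bounded $q$ recurring as $N\to\infty$, splitting into the rational and irrational cases as above; since the paper only needs the existence of arbitrarily large such $q$ (it feeds $q_0$ in and gets some larger $q$ out), this suffices and no effective rate is required.
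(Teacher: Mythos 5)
The paper does not actually prove Theorem~\ref{Dir}: it is quoted as a known result from Schmidt's book (Section~2, Theorem~1A) and used as a black box in the proof of Lemma~\ref{L2}. Your proposal supplies the standard pigeonhole proof, and after your mid-stream self-correction (taking $j=0,1,\dots,N^k$, i.e.\ $N^k+1$ points $P_j$ distributed among the $N^k$ boxes of side $1/N$) it is correct: two points $P_{j_1},P_{j_2}$ in the same box give $q=j_2-j_1$ with $1\le q\le N^k$ and $|q\alpha_i'-p_i|<1/N\le q^{-1/k}$ for the integers $p_i=\lfloor j_2\alpha_i'\rfloor-\lfloor j_1\alpha_i'\rfloor$ (equivalently the nearest integers to $q\alpha_i'$), which is the stated inequality after dividing by $q$, with strictness preserved. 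The one genuinely delicate point is the clause $q>q_0'$, which the bare pigeonhole does not deliver, and you handle it correctly: if every denominator producible for arbitrarily large $N$ were bounded, then some fixed $q^*$ would satisfy the bound for infinitely many $N$, forcing $q^*\alpha_i'\in\mathbb{Z}$ for all $i$; in that all-rational case a large multiple of the common denominator gives error exactly $0<q^{-1-1/k}$, and otherwise the denominators are unbounded, so some admissible $q$ exceeds $q_0'$. This is exactly the classical argument, so relative to the paper's citation your write-up gains only self-containedness; there is no gap.
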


Apply this theorem with $\alpha_i$ playing the role of $\alpha'_i$ and $q_0$  playing the role of $q_0'$. Let $q, p_1, \,\dots\,, p_k$ be as in the conclusion of this theorem. Given $\gamma \in \R$, denote by $c(\gamma) = \left\lfloor q\gamma \right\rceil$ the numerator of the best rational approximation of $\gamma$ with denominator equal to $q$. Note that $ |\gamma - \frac{c(\gamma)}{q}| \leq \frac{1}{2q}$ for all $\gamma$. Moreover, if $|\gamma - \frac{c}{q}| < \frac{1}{2q}$ for some $\gamma \in \R, c \in \mathbb{N}$ then $c = c(\gamma)$. The following two propositions show that the function $c(\cdot)$ is strictly increasing and `linear' on $\Gamma$.

\begin{Proposition} \label{P1}
	One has $c(\gamma_{i}) > c(\gamma_{i-1})$ for all $1 \leq i \leq t$.
\end{Proposition}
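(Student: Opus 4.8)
The plan is to unwind the definition of $c(\cdot)$ and play the uniform rounding error $1/(2q)$ off against the guaranteed gap $\delta$ between consecutive elements of $\Gamma$. Recall that $c(\gamma)=\lfloor q\gamma\rceil$ is the nearest integer to $q\gamma$, so that $|q\gamma - c(\gamma)|\leq \tfrac{1}{2q}\cdot q = \tfrac12$ for every real $\gamma$; this is the only property of $c(\cdot)$ I will use here.

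First I would record the spacing estimate. For $1\leq i\leq t$ we have $\gamma_i-\gamma_{i-1}\geq \delta$ by the definition of $\delta=\min_{1\leq i\leq t+1}\{\gamma_i-\gamma_{i-1}\}$. Since the Dirichlet theorem was applied with $q_0'=q_0$, the resulting denominator satisfies $q>q_0$, hence $\tfrac1q<\tfrac1{q_0}<\delta$ by the choice of $q_0$. Combining these, $q(\gamma_i-\gamma_{i-1})\geq q\delta>1$.

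Then I would chain the two error bounds with this gap:
\begin{equation*}
c(\gamma_i)-c(\gamma_{i-1}) = \big(c(\gamma_i)-q\gamma_i\big) + q(\gamma_i-\gamma_{i-1}) + \big(q\gamma_{i-1}-c(\gamma_{i-1})\big) \geq -\tfrac12 + q(\gamma_i-\gamma_{i-1}) - \tfrac12 > 0.
\end{equation*}
Since $c(\gamma_i)$ and $c(\gamma_{i-1})$ are integers, the strict inequality $c(\gamma_i)>c(\gamma_{i-1})$ follows, which is exactly the claim. There is no real obstacle in this step; the only thing to be careful about is to use the \emph{strict} inequality $q\delta>1$ (not merely $\geq$), which is what the choice $1/q_0<\delta$ guarantees, so that the final comparison of integers is strict. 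Note that the second condition on $q_0$ (the one involving $\theta$ and $q_0^{1+1/k}$) is not needed for this proposition; it will presumably be used in the companion proposition establishing that $c(\cdot)$ is `linear' on $\Gamma$.
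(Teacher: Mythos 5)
Your proof is correct and takes essentially the same approach as the paper: both arguments play the rounding error bound $|q\gamma - c(\gamma)|\le 1/2$ off against the gap $q(\gamma_i-\gamma_{i-1})\ge q\delta > q_0\delta > 1$. The only cosmetic difference is that the paper first notes $c(\gamma_i)\ge c(\gamma_{i-1})$ by monotonicity of rounding and then derives a contradiction from equality via the triangle inequality, whereas you establish the strict inequality directly by telescoping.
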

\begin{proof}
	Given $1 \leq i \leq t$, it is clear that $c(\gamma_{i}) \geq c(\gamma_{i-1})$. Assume that $c(\gamma_{i}) = c(\gamma_{i-1})$. Now it follows by the triangle inequality that
	\begin{equation*}
	\delta \leq \left|\gamma_{i}-\gamma_{i-1}\right| \leq \left|\gamma_{i}-\frac{c(\gamma_{i})}{q}\right| + \left|\gamma_{i-1}- \frac{c(\gamma_{i-1})}{q}\right| \leq \frac{1}{2q} + \frac{1}{2q} = \frac{1}{q} < \frac{1}{q_0},
	\end{equation*} 
	which contradicts the definition of $q_0$.
\end{proof}

\begin{Proposition} \label{P2}
	Let $\gamma = d_1\alpha_1+\dots+ d_k\alpha_k \in \Gamma$. Then $c(\gamma) = d_1p_1+\dots+ d_kp_k$. In particular, for all $\gamma_{i}, \gamma_{i'} \in \Gamma$ such that $\gamma_{i}+\gamma_{i'} \in \Gamma$ one has $c(\gamma_{i}+\gamma_{i'}) = c(\gamma_{i})+c(\gamma_{i'})$.
\end{Proposition}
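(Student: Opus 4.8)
The plan is to reduce everything to the uniqueness property recorded just before the proposition: if $c$ is an integer with $\left|\gamma - \frac{c}{q}\right| < \frac{1}{2q}$, then necessarily $c = c(\gamma)$. So, in order to prove $c(\gamma) = d_1p_1 + \dots + d_kp_k$, it suffices to exhibit the integer $d_1p_1 + \dots + d_kp_k$ (it is an integer since each $p_i \in \mathbb{Z}$) and check that it lies within $\frac{1}{2q}$ of $\gamma$, equivalently that $\big| q\gamma - (d_1p_1 + \dots + d_kp_k) \big| < \frac12$.

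First I would bound the coefficients. Since $\gamma = d_1\alpha_1 + \dots + d_k\alpha_k$ belongs to $\Gamma$, we have $\gamma \leq \gamma_t$, and since every $\alpha_i$ is at least $\gamma_1 = \min_{1 \leq i \leq k}\alpha_i > 0$, this yields $\gamma_1(d_1 + \dots + d_k) \leq d_1\alpha_1 + \dots + d_k\alpha_k = \gamma \leq \gamma_t$, hence $d_1 + \dots + d_k \leq \gamma_t/\gamma_1 = \theta$. (If all $d_i = 0$ the assertion is trivial because $c(0) = 0$, so assume some $d_i > 0$.) Now I would expand $q\gamma - \sum_i d_ip_i = \sum_i d_i(q\alpha_i - p_i)$ and insert the Dirichlet estimate of Theorem~\ref{Dir}, namely $|q\alpha_i - p_i| < q^{-1/k}$, together with the bound above:
\[
\Big| q\gamma - \sum_{i=1}^{k} d_ip_i \Big| \;\leq\; \sum_{i=1}^{k} d_i\,|q\alpha_i - p_i| \;<\; \frac{d_1 + \dots + d_k}{q^{1/k}} \;\leq\; \frac{\theta}{q^{1/k}} \;\leq\; \frac{\theta}{q_0^{1/k}} \;<\; \frac12,
\]
where the last inequality is exactly the rearrangement of the defining condition $\theta/q_0^{1+1/k} < 1/(2q_0)$ on $q_0$, and we used that $q > q_0$. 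By the uniqueness remark this gives $c(\gamma) = d_1p_1 + \dots + d_kp_k$, which is the first assertion.

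For the ``in particular'' part, I would fix representations $\gamma_i = \sum_l d_l\alpha_l$ and $\gamma_{i'} = \sum_l d'_l\alpha_l$ with nonnegative integer coefficients; then $\gamma_i + \gamma_{i'} = \sum_l (d_l + d'_l)\alpha_l$, which lies in $\Gamma$ by hypothesis, so applying the first assertion to each of $\gamma_i$, $\gamma_{i'}$, and $\gamma_i + \gamma_{i'}$ gives $c(\gamma_i + \gamma_{i'}) = \sum_l (d_l + d'_l)p_l = c(\gamma_i) + c(\gamma_{i'})$.

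I do not expect a genuine obstacle here: the parameters $\delta$, $\theta$, and $q_0$ were introduced precisely so that this chain of inequalities closes. The only point requiring a little care is to derive and use the a priori bound $d_1 + \dots + d_k \leq \theta$ (this is the role of $\theta$) \emph{before} invoking the Dirichlet approximation, and to remember that $q$ was chosen strictly larger than $q_0$.
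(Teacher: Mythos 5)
Your proof is correct and follows essentially the same route as the paper's: bound $d_1+\dots+d_k\le\theta$ via $\gamma\le\gamma_t$ and $\alpha_i\ge\gamma_1$, feed the Dirichlet estimate into the triangle inequality to land within $\tfrac{1}{2q}$ of $\gamma$, then invoke the uniqueness of the nearest fraction with denominator $q$; the only differences (clearing the denominator $q$, and spelling out the additivity step) are cosmetic.
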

\begin{proof}
	Set $ \varepsilon_i = \left|\alpha_i- \frac{p_i}{q}\right|$ and recall that $\varepsilon_i\leq q^{-(1+1/k)}$.
	Recall that
	\begin{equation*}
	   \theta= \frac{\gamma_t}{\gamma_1} = \frac{\alpha_1+ \dots+\alpha_k}{\min_{1\leq i \leq k} \left\{\alpha_i\right\}}.
	\end{equation*}
	Since $\gamma \leq \alpha_1+\dots+\alpha_k$, it is easy to see that $d_1+\dots+ d_k \leq \theta$. Now it is clear that
	\begin{equation*}
	\left|\gamma - \frac{d_1p_1+\dots+ d_kp_k}{q}\right| \leq d_1\varepsilon_1+\dots+ d_k\varepsilon_k \leq \frac{d_1+\dots+ d_k}{q^{1+1/k}}%\varepsilon \leq \theta \varepsilon 
	\leq \frac{\theta}{q^{1+1/k}} < \frac{1}{2q},
	\end{equation*}
	since $q > q_0$. This implies that $c(\gamma) = d_1p_1+\ldots+d_kp_k.$ The second part of the proposition is immediate from the first part by the `linearity' of $c(\gamma)$ on $\gamma\in \Gamma$.
\end{proof}

Proposition~\ref{P2} implies that $c(\gamma_0) = 0$ and $c(\gamma_{t}) = p_1+\dots+p_k=m$. We define the desired sequence $a_0, \,\dots\,, a_m$ as follows. Set $a_0 = 0$. Given $0 < l \leq m$, it follows from Proposition \ref{P1} that there is a unique $1\leq i \leq t$ such that $ c(\gamma_{i-1}) < l \leq c(\gamma_{i})$. We set
\begin{equation*}
a_l = \gamma_{i} - \frac{c(\gamma_{i})-l}{2m}\delta.
\end{equation*}

To finish the proof of Lemma \ref{L2} we  need to verify that this sequence is strictly increasing and satisfies \eqref{L2 1} and \eqref{L2 2}.

First, we show that $a_{l} > a_{l-1}$ for all $1 \leq l \leq m$. If there is $1 \leq i \leq t$ such that $c(\gamma_{i-1})< l-1 < l \leq c(\gamma_{i})$ then $a_{l} - a_{l-1} = \frac{1}{2m}\delta > 0$. Moreover, for all $1 \leq i \leq t$ one has
\begin{equation*}
	a_{c(\gamma_{i-1})+1}-a_{c(\gamma_{i-1})} = \left( \gamma_{i}- \frac{c(\gamma_{i}) -c(\gamma_{i-1})-1}{2m}\delta\right)-\gamma_{i-1} > (\gamma_{i} - \gamma_{i-1}) - \frac{\delta}{2} \geq \frac{\delta}{2} > 0.
\end{equation*}

Second, for all $0 \leq i \leq t$ we have $a_{c(\gamma_{i})} = \gamma_{i}$ by construction. Therefore, Proposition~\ref{P2} implies that the sequence $a_0, \,\dots\,, a_m$ satisfies \eqref{L2 2}.

Finally, given $0 \leq l,r \leq m$ such that $l+r \leq m$, we need to check that $a_{l+r} \leq a_l + a_r$. Observe that there is nothing to check if either $l=0$ or $r=0$. Thus w.l.o.g. we assume that both $l$ and $r$ are greater than $0$. Under this assumption there are unique $1 \leq i, i',j \leq t$ such that $c(\gamma_{i-1}) < l \leq c(\gamma_{i})$, $c(\gamma_{i'-1}) < r \leq c(\gamma_{i'})$, and $c(\gamma_{j-1}) < l+r \leq c(\gamma_{j})$. Set $\gamma = \gamma_{i}+\gamma_{i'}$.

Assume that $\gamma < \gamma_{j}$. Then $\gamma \in \Gamma$ and we can apply Proposition \ref{P2} to get that $c(\gamma_{j-1}) <l+r \leq c(\gamma_{i})+c(\gamma_{i'}) = c(\gamma)$. Thus, by Proposition~\ref{P1} we get that  $\gamma > \gamma_{j-1}$ and so $\gamma\geq \gamma_j$, a contradiction. Thus,  $\gamma \geq \gamma_{j}$.

Suppose that  $\gamma = \gamma_{j}$. Then Proposition \ref{P2} implies that $c(\gamma) = c(\gamma_{i})+ c(\gamma_{i'})$, and we have
\begin{align*}
a_l+a_r-a_{l+r} =& \left( \gamma_{i} - \frac{c(\gamma_{i})-l}{2m}\delta \right) + \left( \gamma_{i'} - \frac{c(\gamma_{i'})-r}{2m}\delta \right) - \left( \gamma_{j} - \frac{c(\gamma_{j})-l-r}{2m}\delta \right)  \\
=& (\gamma_{i}+\gamma_{i'}-\gamma)+\frac{c(\gamma)-c(\gamma_{i})- c(\gamma_{i'})}{2m}\delta = 0.
\end{align*}

Suppose that $\gamma > \gamma_{j}$ and, consequently,  $\gamma \geq \gamma_{j+1}$. The sequence of $a_i$'s is increasing, and so $a_{l+r} \leq a_{c(\gamma_{j})} = \gamma_{j}$. We conclude that 
\begin{align*}
a_l+a_r-a_{l+r} \geq& \left( \gamma_{i} - \frac{c(\gamma_{i})-l}{2m}\delta \right) + \left( \gamma_{i'} - \frac{c(\gamma_{i'})-r}{2m}\delta \right) - \gamma_{j}  \\
>& (\gamma_{i}+\gamma_{i'}-\gamma_{j}) -\frac{\delta}{2} -\frac{\delta}{2} \geq (\gamma_{j+1}-\gamma_{j}) -\delta \geq 0.
\end{align*}
This concludes the proof of Lemma~\ref{L2}.

\section{The proof of Theorem \ref{T1}}\label{sec3}

We begin with some notation that was introduced (in a slightly different form) by Frankl and R\"odl  \cite{FranklRodl1990}. Given a real $p \ge 1$ or $p=\infty$, a metric space $\mathcal{M}$ is called {\it $\ell_p-$super-Ramsey (with parameters $F_\mathcal{M}$ and $\chi_\mathcal{M}$)} if there exist constants $F_\mathcal{M} \geq \chi_\mathcal{M} > 1$ and a sequence of sets $V_\mathcal{M}(n) \subset \mathbb{R}^n_p$ such that $\left| V_\mathcal{M}(n)\right| \leq \left( F_\mathcal{M}+o(1) \right)^n$ and each subset of $V_{\mathcal M}(n)$ of size greater than $\left|V_{\mathcal M} (n)\right| \left( \chi_\mathcal{M}+o(1) \right)^{-n}$, $n \rightarrow \infty$, contains a copy of $\mathcal M$. An easy application of the pigeonhole principle as in Corollary~\ref{B2} shows that for each  $\ell_p-$super-Ramsey metric space $\mathcal{M}$ with parameters $F_\mathcal{M}$ and $\chi_\mathcal{M}$ that contains at least two points, one has $\chi(\mathbb{R}^n_p; \mathcal{M}) \geq \left( \chi_\mathcal{M}+o(1) \right)^n$ as $n \rightarrow \infty$. So, the $\ell_p-$super-Ramsey property implies the exponentially $\ell_p-$Ramsey one. Therefore, in order to prove Theorem~\ref{T1} it is sufficient to prove the following statement.

\begin{Theorem} \label{T2}
	Any finite metric space is $\ell_\infty-$super-Ramsey.
\end{Theorem}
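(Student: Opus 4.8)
The plan is to reduce the general case of an arbitrary finite metric space $\mathcal{M}$ to the case of batons, which has already been settled (the sets $A^n$ of Theorem~\ref{B5} together with the pigeonhole principle witness that every baton is $\ell_\infty$-super-Ramsey with $F_{\mathcal M}=m+1$, $\chi_{\mathcal M}=(m+1)/m$). The key observation is that in the $\ell_\infty$-metric a baton embeds extremely flexibly: if $\mathcal{M}=\{M_0,\dots,M_r\}$ has all pairwise distances among the consecutive gaps of some baton $\mathcal B(\alpha_1,\dots,\alpha_k)$, then by placing $\mathcal M$ on $k+1$ out of the $k+1$ marked points of $\mathcal B_k$-type structure along \emph{one} coordinate and coding the remaining pairwise distances on the other coordinates, a copy of the baton in a high-dimensional product will contain a copy of $\mathcal M$. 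More precisely, I would first invoke Fréchet's embedding (Lemma~\ref{L Frechet}) to realize $\mathcal{M}$ as a subset $\{\mathbf{M}_0,\dots,\mathbf{M}_r\}\subset \mathbb R^N_\infty$ for some fixed $N$; write $\rho_{ij}=\|\mathbf M_i-\mathbf M_j\|_\infty$ for the pairwise distances, and let $D=\max_{i,j}\rho_{ij}$ be the diameter.

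The main step is to choose a single baton $\mathcal B=\mathcal B(\alpha_1,\dots,\alpha_k)$ whose partial-sum distances $\{\alpha_{s+1}+\dots+\alpha_t : 0\le s<t\le k\}$ contain all the values $\rho_{ij}$, and such that $D$ itself equals the full length $\alpha_1+\dots+\alpha_k$; e.g. take the gaps to be a common refinement fine enough that every $\rho_{ij}$ is a partial sum. Then I would build $V_{\mathcal M}(n)$ as follows. Apply Theorem~\ref{B5} to $\mathcal B$ to obtain the integer $m$, the set $A\subset\mathbb R$ of size $m+1$, and the property that any subset of $A^{n'}$ of size $>m^{n'}$ contains a copy of $\mathcal B$. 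Set $n'=n-N$ (say), and consider vectors in $\mathbb R^n_\infty=\mathbb R^{n'}_\infty\times\mathbb R^N_\infty$ of the form $(\mathbf a,\mathbf M)$ with $\mathbf a\in A^{n'}$ and $\mathbf M\in\{\mathbf M_0,\dots,\mathbf M_r\}$; more carefully, one needs the $A^{n'}$-coordinates to be ``synchronized'' with which point $\mathbf M_i$ one sits on, so I would instead take $V_{\mathcal M}(n)$ to be the image of $A^{n'}$ under a map that appends, to each $\mathbf a$, the coordinates $\mathbf M_i$ corresponding to the position of $\mathbf a$ relative to a copy of $\mathcal B$ — but since position is not well-defined pointwise, the cleaner route is: take $V_{\mathcal M}(n)=A^{n'}\times\{\mathbf M_0,\dots,\mathbf M_r\}$ itself, of size $(m+1)^{n'}\cdot(r+1)\le (m+1+o(1))^n$. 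A colour class of size $>\,|V_{\mathcal M}(n)|\cdot\big(\tfrac{m+1}{m}\big)^{-n}$ must, after projecting away the last $N$ coordinates and applying pigeonhole on the $(r+1)$ possible $\mathbf M$-blocks, contain $>m^{n'}$ elements of $A^{n'}$ sharing a common $\mathbf M$-block — but that is the wrong conclusion, so the construction must instead \emph{force} the right $\mathbf M$-block onto each baton vertex.

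The resolution, and the step I expect to be the main obstacle, is to identify the $r+1$ distinguished points of the baton inside $A^{n'}$ with the $r+1$ points of $\mathcal M$ \emph{coordinate-by-coordinate in the appended block}. Concretely: a copy of $\mathcal B_m$ inside $[m]_0^{n'}$, as produced in the proof of Theorem~\ref{B5}, comes with an explicit coordinate $j$ on which the vertices take the values $0,p_1,p_1+p_2,\dots,m$ (or the reverse); so I would enlarge the alphabet of that one coordinate and build $V_{\mathcal M}(n)$ as $\{\,(\mathbf a, g(\text{value of }\mathbf a \text{ in each coordinate}))\,\}$ where $g$ translates a coordinate value $a_{d_1p_1+\dots+d_kp_k}=d_1\alpha_1+\dots+d_k\alpha_k$ into the $\mathbb R^N_\infty$-vector $\mathbf M_i$ whenever that partial sum equals $\rho$-data for point $i$, and into a ``dummy'' far-away vector otherwise — chosen so large that those dummy coordinates never become the max, hence never interfere with $\mathcal B$-detection, yet on the distinguished coordinate $j$ they correctly reproduce $\|\mathbf M_i-\mathbf M_{i'}\|_\infty=\rho_{ii'}$. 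Verifying that (i) the appended coordinates do not destroy the ``any large subset of $A^{n'}$ contains a copy of $\mathcal B$'' property — because only coordinate $j$ of the found copy carries real $\mathcal M$-information and the others are dominated — and (ii) the resulting $k{+}1$ points are genuinely isometric to $\mathcal M$ and not merely to the baton, is the technical heart; it mirrors the bookkeeping already done in the proofs of Theorems~\ref{B3} and~\ref{B5}, where one coordinate $j$ is shown to realize the extremal distance while all others stay below it. Once this is in place, the pigeonhole argument of Corollary~\ref{B2} gives $\chi(\mathbb R^n_\infty;\mathcal M)\ge(\chi_{\mathcal M}+o(1))^n$ with $\chi_{\mathcal M}=(m+1)/m>1$ and $F_{\mathcal M}=m+1$, completing the proof of Theorem~\ref{T2} and hence of Theorem~\ref{T1}.
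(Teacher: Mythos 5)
Your reduction from a general finite metric space to a single baton has a genuine gap, located exactly at the step you call the ``technical heart'' and then leave unverified. When Theorem~\ref{B5} produces a copy $\mathbf y^0,\dots,\mathbf y^k$ of the baton inside $A^{n'}$, the distinguished coordinate $j$ is the \emph{only} coordinate on which the vertices are guaranteed to take the canonical values $0,\alpha_1,\alpha_1+\alpha_2,\dots$; on every other coordinate $j'$ the entries $a(y^s_{j'})$ are arbitrary elements of $A$ subject only to $|a(y^s_{j'})-a(y^t_{j'})|\leq\alpha_{s+1}+\dots+\alpha_t$. If you append a block $g(a_i)$ for each coordinate $i$ of $\mathbf a$, the appended contribution to $\|\cdot\|_\infty$ is $\max_i\|g(a(y^s_i))-g(a(y^t_i))\|_\infty$, and the maximum ranges over those uncontrolled coordinates as well. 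So for the outcome to be a copy of $\mathcal M$ you need $g$ to satisfy $\|g(a)-g(a')\|_\infty\leq \rho_{\sigma(s),\sigma(t)}$ whenever $|a-a'|\leq\alpha_{s+1}+\dots+\alpha_t$ (where $\sigma$ is your assignment of $\mathcal M$-points to baton vertices), while simultaneously $g$ sends canonical values to $\mathbf M_{\sigma(0)},\dots,\mathbf M_{\sigma(k)}$ realizing all the $\rho$'s. These two demands force a monotonicity condition: $\alpha_{u+1}+\dots+\alpha_v\leq\alpha_{s+1}+\dots+\alpha_t$ must imply $\rho_{\sigma(u),\sigma(v)}\leq\rho_{\sigma(s),\sigma(t)}$. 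That is possible essentially only when $\mathcal M$ already embeds isometrically into $\mathbb R$, which makes the construction circular. A concrete counterexample is the star metric on four points (one center at distance $1$ from each of three mutually equidistant leaves at pairwise distance $2$): a short case check shows no ordering of the four points and no choice of gaps $\alpha_1,\alpha_2,\alpha_3$ satisfies the needed monotonicity. The ``dummy far-away vector'' idea does not rescue this and is internally inconsistent: as soon as a pair $(\mathbf y^s,\mathbf y^t)$ has a dummy in one coordinate and a genuine $\mathbf M_i$ in the same coordinate, the far-away dummy \emph{is} the maximum, destroying isometry, whereas a nearby dummy contributes nothing towards realizing $\rho_{st}$.

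The paper closes exactly this gap with a tool your proposal tries to bypass: Theorem~\ref{CartProd1}, the $\ell_\infty$ version of the Frankl--R\"odl product theorem, which asserts that a Cartesian product of $\ell_\infty$-super-Ramsey spaces is again $\ell_\infty$-super-Ramsey. Once that is available the argument is quick: batons are super-Ramsey by Theorem~\ref{B5}; hence all grids (products of batons) are super-Ramsey; super-Ramseyness is hereditary to subsets; every finite $S\subset\mathbb R^d_\infty$ sits in a $d$-dimensional grid; and Lemma~\ref{L Frechet} embeds an arbitrary finite $\mathcal M$ into $\mathbb R^{|\mathcal M|}_\infty$. Note also that your proposal would yield $\chi_{\mathcal M}=(m+1)/m$ for a single baton-derived $m$, which is substantially stronger than the base the paper's proof produces (each use of Theorem~\ref{CartProd1} weakens the base); the paper explicitly remarks that such sharpened bounds require a more elaborate argument deferred to a separate paper, which is a further indication that the shortcut cannot be had this cheaply.
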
 

Given a real $p\ge 1$ and given two metric spaces $\mathcal{X} = \left( X, \rho_X\right)$ and $\mathcal{Y} = \left( Y, \rho_Y\right)$, their {\it Cartesian product} $\mathcal{X} \times \mathcal{Y}$ is the metric space $\left( X\times Y, \rho\right)$, where
\begin{equation*}
	\rho((x_1, y_1), (x_2, y_2)) = (\rho_X(x_1, x_2)^p + \rho_Y(y_1, y_2 )^p)^{1/p} 
\end{equation*}
for all $x_1,x_2 \in X$ and $y_1,y_2 \in Y$. In a special case $p=\infty$, to which we devote the present paper, we set
\begin{equation*}
	\rho((x_1, y_1 ), (x_2, y_2)) = \max\left\{ \rho_X(x_1, x_2), \rho_Y( y_1, y_2)  \right\}.
\end{equation*}
Frankl and R\"odl \cite{FranklRodl1990} showed that a Cartesian product of any two $\ell_2-$super-Ramsey finite metric spaces  is also $\ell_2-$super-Ramsey. Their proof actually works for the $\ell_p$ metric for all $p$. In case $p=\infty$ the proof is spelled out  in a paper by the second author  \cite{Sag2018_CartProd}, where he also gives an explicit dependence of the parameters.

\begin{Theorem}[\cite{FranklRodl1990}, Theorem 2.2; \cite{Sag2018_CartProd}, Proposition 1] \label{CartProd1}
	Let $\mathcal{X}$ and $\mathcal{Y}$ be $\ell_\infty-$super-Ramsey finite metric spaces. Then $\mathcal{X} \times \mathcal{Y}$ is also  $\ell_\infty-$super-Ramsey.
\end{Theorem}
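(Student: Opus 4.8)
The plan is to realise the witnessing sets for $\mathcal{X}\times\mathcal{Y}$ as Cartesian products of the witnessing sets for $\mathcal{X}$ and $\mathcal{Y}$ living on two \emph{very unequal} blocks of coordinates, and then run a short double-counting argument. Write $\mathcal{X}=(X,\rho_X)$, $\mathcal{Y}=(Y,\rho_Y)$, $b=|Y|$, and let $F_\mathcal{X}\ge\chi_\mathcal{X}>1$ with the sets $V_\mathcal{X}(\cdot)$, and $F_\mathcal{Y}\ge\chi_\mathcal{Y}>1$ with the sets $V_\mathcal{Y}(\cdot)$, witness the two super-Ramsey properties; let $\mu=\mu(n_1)=(\chi_\mathcal{X}+o(1))^{-n_1}$ and $\nu=\nu(n_2)=(\chi_\mathcal{Y}+o(1))^{-n_2}$ denote the corresponding threshold functions. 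I would fix a small constant $\varepsilon>0$ (pinned down only at the very end), and for each $n$ write $n=n_1+n_2$ with $n_2=\lceil\varepsilon n/(1+\varepsilon)\rceil$, so $n_1=\tfrac{n}{1+\varepsilon}+O(1)$, $n_2=\tfrac{\varepsilon n}{1+\varepsilon}+O(1)$, and both tend to infinity. Identifying $\mathbb{R}^n_\infty$ with the $\ell_\infty$-Cartesian product $\mathbb{R}^{n_1}_\infty\times\mathbb{R}^{n_2}_\infty$ by concatenating coordinates (an isometry, since a maximum over $n$ coordinates equals the larger of the two partial maxima), I set $V_{\mathcal{X}\times\mathcal{Y}}(n):=V_\mathcal{X}(n_1)\times V_\mathcal{Y}(n_2)\subseteq\mathbb{R}^n_\infty$, whose size is $|V_\mathcal{X}(n_1)|\,|V_\mathcal{Y}(n_2)|\le(F_\mathcal{X}+o(1))^{n_1}(F_\mathcal{Y}+o(1))^{n_2}=(F+o(1))^n$ with $F:=F_\mathcal{X}^{1/(1+\varepsilon)}F_\mathcal{Y}^{\varepsilon/(1+\varepsilon)}$.

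The only structural input is that a product of copies is a copy: if $X^*\subseteq\mathbb{R}^{n_1}_\infty$ is isometric to $\mathcal{X}$ and $Y^*\subseteq\mathbb{R}^{n_2}_\infty$ is isometric to $\mathcal{Y}$, then $X^*\times Y^*\subseteq\mathbb{R}^n_\infty$ is isometric to $\mathcal{X}\times\mathcal{Y}$, because under the product of the two witnessing isometries the image of $\bigl((u,v),(u',v')\bigr)$ is at distance $\max\{\|g(u)-g(u')\|_\infty,\|h(v)-h(v')\|_\infty\}=\max\{\rho_X(u,u'),\rho_Y(v,v')\}$, which is exactly the product metric. So it suffices to show that every sufficiently large $W\subseteq V_\mathcal{X}(n_1)\times V_\mathcal{Y}(n_2)$ contains a set of the form $X^*\times Y^*$. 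For $x\in V_\mathcal{X}(n_1)$ put $W^x:=\{y\in V_\mathcal{Y}(n_2):(x,y)\in W\}$; for each copy $Y^*$ of $\mathcal{Y}$ inside $V_\mathcal{Y}(n_2)$ put $W_{Y^*}:=\{x:\{x\}\times Y^*\subseteq W\}$; and let $N$ be the number of copies of $\mathcal{Y}$ inside $V_\mathcal{Y}(n_2)$, so that $N\le|V_\mathcal{Y}(n_2)|^{\,b}\le(F_\mathcal{Y}+o(1))^{bn_2}$.

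The heart of the argument is then a two-line double count. Suppose $|W|>|V_\mathcal{X}(n_1)|\,|V_\mathcal{Y}(n_2)|\,(\nu+N\mu)$, and let $A:=\{x:|W^x|>|V_\mathcal{Y}(n_2)|\nu\}$. From $|W|=\sum_x|W^x|\le|A|\,|V_\mathcal{Y}(n_2)|+|V_\mathcal{X}(n_1)|\,|V_\mathcal{Y}(n_2)|\nu$ one gets $|A|>|V_\mathcal{X}(n_1)|\,N\mu$. Each $x\in A$ has $|W^x|>|V_\mathcal{Y}(n_2)|\nu$, so $W^x$ contains a copy $Y^*$ of $\mathcal{Y}$, i.e. $x\in W_{Y^*}$ for at least one such $Y^*$; summing, $\sum_{Y^*}|W_{Y^*}|\ge|A|>N\cdot|V_\mathcal{X}(n_1)|\mu$, so, as there are only $N$ candidates $Y^*$, one of them has $|W_{Y^*}|>|V_\mathcal{X}(n_1)|\mu$. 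Then $W_{Y^*}$ contains a copy $X^*$ of $\mathcal{X}$, and $X^*\times Y^*\subseteq W$ is the desired copy of $\mathcal{X}\times\mathcal{Y}$. Hence the admissible threshold fraction for $V_{\mathcal{X}\times\mathcal{Y}}(n)$ is $\nu+N\mu$, and what remains is to bound $\nu+N\mu$ by $(\chi+o(1))^{-n}$ for some constant $\chi>1$.

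This last step is where the only real difficulty sits, and it is what dictates the unequal split. As functions of $n$, $\nu=(\chi_\mathcal{Y}+o(1))^{-n_2}$ decays like $\bigl(\chi_\mathcal{Y}^{\varepsilon/(1+\varepsilon)}+o(1)\bigr)^{-n}$, exponentially since $\chi_\mathcal{Y}>1$, while $N\mu\le(\chi_\mathcal{X}+o(1))^{-n_1}(F_\mathcal{Y}+o(1))^{bn_2}$ decays like $\bigl(\chi_\mathcal{X}^{-1/(1+\varepsilon)}F_\mathcal{Y}^{b\varepsilon/(1+\varepsilon)}+o(1)\bigr)^{n}$, whose base is strictly below $1$ precisely when $\varepsilon<\tfrac{\log\chi_\mathcal{X}}{b\log F_\mathcal{Y}}$ — a strictly positive threshold because $\chi_\mathcal{X}>1$. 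Fixing any $\varepsilon$ in that range, both terms are at most $(\chi+o(1))^{-n}$ with $\chi:=\min\bigl\{\chi_\mathcal{Y}^{\varepsilon/(1+\varepsilon)},\ \chi_\mathcal{X}^{1/(1+\varepsilon)}F_\mathcal{Y}^{-b\varepsilon/(1+\varepsilon)}\bigr\}>1$ (the constant factor $2$ in $\nu+N\mu\le 2\max\{\cdots\}$ being absorbed into the base), and $F=F_\mathcal{X}^{1/(1+\varepsilon)}F_\mathcal{Y}^{\varepsilon/(1+\varepsilon)}\ge\chi_\mathcal{X}^{1/(1+\varepsilon)}\ge\chi$, so $(F,\chi)$ are legitimate parameters. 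I expect the genuine obstacle to be exactly this balancing act: the density saving $\chi_\mathcal{X}^{-n_1}$ must outrun the cost $(F_\mathcal{Y})^{bn_2}$ of ranging over all copies of $\mathcal{Y}$, which forces $n_1\gg n_2$, yet $n_2$ must still go to infinity so that the defining property of $V_\mathcal{Y}(\cdot)$ applies with its own $o(1)$; taking $n_2=\Theta(n)$ with a small enough constant reconciles the two. Everything else is the routine bookkeeping of $o(1)$'s; an explicit version, with concrete values of the parameters, is worked out in \cite{Sag2018_CartProd}.
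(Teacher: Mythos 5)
Your argument is correct: the product-of-witness-sets construction on an unequal split $n=n_1+n_2$, the double count over fibers $W^x$ and over copies $Y^*$ of $\mathcal{Y}$, and the final balancing condition $\varepsilon<\log\chi_{\mathcal{X}}/(b\log F_{\mathcal{Y}})$ all check out, and this is essentially the argument of Frankl--R\"odl and of \cite{Sag2018_CartProd} that the paper cites instead of proving the theorem itself. Nothing further is needed beyond perhaps a remark on the trivial case $|Y|=1$, which your bound on $N$ already covers.
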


Note that Theorem~\ref{B5} implies that each baton $\mathcal{B}$ is $\ell_\infty-$super-Ramsey (with the parameters $F_\mathcal{B} = m+1$ and $\chi_\mathcal{B} = \frac{m+1}{m}$, where $m$ is from Theorem~\ref{B5}). Thus, by Theorem~\ref{CartProd1}, we conclude that for all $d \in \mathbb{N}$, each {\it $d-$dimensional grid}, i.e., a Cartesian product of $d$ batons, is also $\ell_\infty-$super-Ramsey. It should be clear that the property of being $\ell_\infty-$super-Ramsey (with parameters $F$ and $\chi$) is hereditary with respect to taking subsets.  Since each finite subset $\{\mathbf x^0, \,\ldots\, ,\mathbf x^k\}\subset \R^d$ is a subset of the $d-$dimensional grid $\prod_{i=1}^d\{x^0_i, \,\ldots\, , x^k_i\}$, we get the following corollary. 
\begin{Corollary}
	Given $d \in \mathbb{N}$, any finite $S\subset \R^d_\infty$ is  $\ell_\infty-$super-Ramsey.
\end{Corollary}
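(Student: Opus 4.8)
The plan is to derive this as a direct consequence of Theorem~\ref{B5} together with the product theorem (Theorem~\ref{CartProd1}), which is exactly the strategy hinted at in the surrounding text; no new geometric work is needed, only an assembly of the pieces. First I would recall that Theorem~\ref{B5} produces, for any baton $\mathcal B = \mathcal B(\alpha_1,\dots,\alpha_k)$, an integer $m$ and a witness family $V(n) = A^n$ with $|A| = m+1$, such that any subset of $A^n$ of size exceeding $m^n = |A^n|\left(\tfrac{m+1}{m}\right)^{-n}$ contains a copy of $\mathcal B$. This is precisely the definition of $\mathcal B$ being $\ell_\infty$-super-Ramsey with parameters $F_{\mathcal B} = m+1$ and $\chi_{\mathcal B} = \tfrac{m+1}{m}$ (both exceeding $1$, and with the trivial $o(1)$ terms equal to zero), so every baton is $\ell_\infty$-super-Ramsey.

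Next I would invoke Theorem~\ref{CartProd1} inductively: a finite induction on $d$ shows that the Cartesian product of $d$ batons is $\ell_\infty$-super-Ramsey, since each application of Theorem~\ref{CartProd1} takes two $\ell_\infty$-super-Ramsey finite spaces to an $\ell_\infty$-super-Ramsey one, and a product of $d$ batons is the product of a product of $d-1$ batons with a single baton. Call such a product a $d$-dimensional grid. Finally, given an arbitrary finite set $S = \{\mathbf x^0, \dots, \mathbf x^k\} \subset \R^d$, consider the grid $G = \prod_{i=1}^d \{x^0_i, \dots, x^k_i\}$, i.e.\ the product over coordinates $i$ of the baton whose point set is the (distinct values among the) $i$-th coordinates of the points of $S$; this $G$ is $\ell_\infty$-super-Ramsey by the previous step, and $S \subseteq G$ with the induced $\ell_\infty$ metric, because the $\ell_\infty$ distance between two points of $\R^d$ equals the maximum over coordinates of the one-dimensional distances, which is exactly the product metric on $G$. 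It then remains only to observe that the $\ell_\infty$-super-Ramsey property is hereditary under taking subsets: if $V_G(n)$ witnesses that $G$ is $\ell_\infty$-super-Ramsey, then the same sets $V_G(n)$ witness it for $S$, since any subset large enough to force a copy of $G$ a fortiori forces a copy of any subspace of $G$, in particular of $S$; the parameters $F$ and $\chi$ are unchanged.

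The only mild subtlety — and the one place where I would be careful rather than terse — is the heredity claim: one must check that "contains a copy of $G$" really does imply "contains a copy of $S$" for an \emph{isometric} subspace $S$ of $G$. This is immediate from the definition of copy, since an isometric embedding of $G$ into $\R^n_\infty$ restricts to an isometric embedding of $S$; thus a monochromatic copy of $G$ contains a monochromatic copy of $S$. I do not anticipate any real obstacle here: every ingredient is either quoted from earlier in the paper or is a one-line observation, and the proof is essentially a bookkeeping exercise tracking the parameters $F$ and $\chi$ through the product and restriction operations. (For completeness one could also note explicitly that a product of $d$ batons with parameters $F_i, \chi_i$ yields a grid with $F = \prod_i F_i$ and some $\chi > 1$ obtained from the $\chi_i$ via Theorem~\ref{CartProd1}, but the exact values are irrelevant for the qualitative statement of the Corollary.)
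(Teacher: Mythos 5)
Your argument is correct and coincides with the paper's own proof: quote Theorem~\ref{B5} to see that batons are $\ell_\infty$-super-Ramsey, iterate Theorem~\ref{CartProd1} to get that $d$-dimensional grids are, observe heredity of the super-Ramsey property under isometric subspaces, and embed $S$ in the coordinate-product grid $\prod_{i=1}^d \{x^0_i,\dots,x^k_i\}$. The one subtlety you flag (that a copy of $G$ restricts to a copy of $S$) is exactly the point the paper leaves as ``it should be clear,'' and your one-line justification of it is sound.
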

The following simple proposition due to Fr\'echet (see \cite{Mat})  concludes the proof of Theorem~\ref{T2}.

\begin{Lemma} \label{L Frechet}
 Any metric space $\mathcal M = (M,\rho)$ with $|M| = d$ is isometric to a subset $S\subset \R^d_{\infty}$.   
\end{Lemma}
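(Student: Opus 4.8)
The plan is to use the classical Fréchet embedding: given $\mathcal{M} = (M, \rho)$ with $M = \{x_1, \dots, x_d\}$, define the map $\varphi : M \to \R^d$ by sending $x_i$ to the vector of its distances to all points, namely $\varphi(x_i) = \big(\rho(x_i, x_1), \rho(x_i, x_2), \dots, \rho(x_i, x_d)\big)$. The image $S = \{\varphi(x_1), \dots, \varphi(x_d)\}$ is the desired subset of $\R^d_\infty$, and the whole content is to check that $\varphi$ is an isometry onto $S$, i.e. that $\|\varphi(x_i) - \varphi(x_j)\|_\infty = \rho(x_i, x_j)$ for all $i, j$.

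For the verification I would argue in two directions. For the upper bound $\|\varphi(x_i) - \varphi(x_j)\|_\infty \le \rho(x_i, x_j)$: the $\ell$-th coordinate of $\varphi(x_i) - \varphi(x_j)$ is $\rho(x_i, x_\ell) - \rho(x_j, x_\ell)$, and by the triangle inequality $|\rho(x_i, x_\ell) - \rho(x_j, x_\ell)| \le \rho(x_i, x_j)$ for every $\ell$; taking the maximum over $\ell$ gives the claim. For the matching lower bound, it suffices to exhibit one coordinate achieving the value $\rho(x_i, x_j)$: take $\ell = j$ (or $\ell = i$), so that the $j$-th coordinate of $\varphi(x_i) - \varphi(x_j)$ equals $\rho(x_i, x_j) - \rho(x_j, x_j) = \rho(x_i, x_j) - 0 = \rho(x_i, x_j)$. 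Combining the two inequalities yields $\|\varphi(x_i) - \varphi(x_j)\|_\infty = \rho(x_i, x_j)$. Finally, $\varphi$ is injective because if $i \neq j$ then $\rho(x_i, x_j) > 0$, so $\varphi(x_i) \neq \varphi(x_j)$; hence $\varphi$ is a bijection $M \to S$ and an isometry, which is exactly what the lemma asserts.

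There is essentially no serious obstacle here — the proof is a two-line triangle-inequality computation — so the only thing to be careful about is bookkeeping: making sure the isometry condition is checked for the ordered pair in both coordinate choices and noting that the dimension $d = |M|$ suffices (one could even trim to $d - 1$ coordinates, but that is not needed for the statement). This lemma is the last ingredient: together with the earlier corollary that every finite $S \subset \R^d_\infty$ is $\ell_\infty$-super-Ramsey, it shows that an arbitrary finite metric space $\mathcal{M}$ embeds isometrically into some $\R^d_\infty$ and therefore inherits the $\ell_\infty$-super-Ramsey property, completing the proof of Theorem~\ref{T2} and hence of Theorem~\ref{T1}.
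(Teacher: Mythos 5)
Your proof is correct and matches the paper's approach exactly: the paper also takes $S$ to be the rows of the distance matrix $D=\big(\rho(u_i,u_j)\big)_{i,j=1}^d$, i.e.\ the Fr\'echet embedding, leaving the triangle-inequality verification implicit. You simply spell out the two-sided estimate that the paper omits.
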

\begin{proof}
    Let $M = \{u_1,\ldots, u_d\}$. The desired set $S$ is given by the rows of the distance matrix $D= \big(\rho(u_i,u_j)\big)_{i,j=1}^d$. 
\end{proof}

\section{Upper bounds}\label{sec4}

Let $\mathcal{M} = \left(M, \rho_M\right)$ be a finite metric space that contains at least two points. Theorem \ref{T1} guarantees the existence of an exponential lower bound on $\chi(\mathbb{R}^n_\infty; \mathcal{M})$. Recall that we also have $\chi(\R^n_\infty;\mathcal M)\leq \chi(\R^n_{\infty})=2^n$. In this section we give a better upper bound. Essentially the same proof appeared in \cite{Pros2018_ExpRams}, \cite{Pros2020_LR}.% Therefore, we will use similar notation whenever it is possible.

For a metric space $\mathcal M$, let $d( \mathcal{M})$ be the {\it diameter} of $\mathcal{M}$, i.e., the maximum over distances between pair of points of $\mathcal M$. Let $l( \mathcal{M}) > 0$ be the smallest number such that for each two points  $x,y \in M$ there are $z_0, z_1, \,\dots\,, z_t \in M$ such that $z_0=x, z_t = y$, and $\rho_M(z_i,z_{i+1}) \leq l$ for all $0 \le i \le t-1$.

In what follows, we denote the natural logarithm by $\log (\cdot)$.

\begin{Theorem} \label{U1}
	For each finite metric space $\mathcal{M}$  that contains at least two points, one has $$\chi(\mathbb{R}^n_\infty; \mathcal{M}) < (1+o(1))n\log n\left( 1+\frac{l(\mathcal M)}{d(\mathcal M)}\right)^n$$ as $n \rightarrow \infty$. 
\end{Theorem}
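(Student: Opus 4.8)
The plan is to construct an explicit periodic coloring of $\R^n_\infty$ along the lines of the classical Larman--Rogers / Prosanov argument. Rescale so that $d(\mathcal M) = 1$, and set $\ell = l(\mathcal M)$. The key geometric observation is that if $S'$ is a copy of $\mathcal M$ in $\R^n_\infty$, then any two points of $S'$ lie within $\ell_\infty$-distance $1$ of each other, and moreover — using the chain condition defining $l(\mathcal M)$ — the whole copy $S'$ is ``$\ell$-connected'': between any two of its points there is a path of steps each of $\ell_\infty$-length at most $\ell$. In particular, $S'$ is contained in a ball of radius $1$, so each coordinate of $S'$ lies in an interval of length $1$; and consecutive points along a chain differ by at most $\ell$ in each coordinate.

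The coloring is built coordinatewise from a $1$-dimensional coloring. Partition $\R$ into half-open intervals of the form $[j(1+\ell), j(1+\ell) + 1)$ for $j \in \Z$, together with the complementary ``gap'' intervals of length $\ell$; color a point of $\R$ by the residue of the index of its long interval modulo some parameter, and give the gaps a special treatment. One then colors a point $\mathbf x \in \R^n$ by the tuple of its coordinate-colors, i.e. the product coloring, possibly after a further random/greedy shift argument applied to a bounded region of size roughly $(1+\ell)^n$ to kill the ``bad'' events near the gaps — this is exactly the step where the extra $(1+o(1))n\log n$ factor appears, coming from a union bound over $n$ coordinates and $O(\log n)$-many shift choices, paralleling Prosanov's refinement of Larman--Rogers. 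The total number of colors is then $\big(1 + \ell + o(1)\big)^n$ times this polynomial-in-$n$ factor, i.e. $(1+o(1)) n \log n \, (1 + l(\mathcal M)/d(\mathcal M))^n$ after undoing the rescaling.

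To verify correctness, suppose a monochromatic copy $S'$ of $\mathcal M$ existed. Because $S'$ has diameter $1$, in each coordinate its projection has length at most $1$, so it meets at most one long interval plus possibly an adjacent gap; the chain condition (steps of size $\le \ell < $ gap length is \emph{not} what we want — rather, the long intervals have length $1 = d(\mathcal M) \ge$ all pairwise distances, so a diameter-$1$ set that avoids the gaps fits inside a single long interval in each coordinate) forces, for a monochromatic set, that the projection avoids every gap and hence lies in a single long block in every coordinate. But then two points of $S'$ that differ by the diameter would have to be at distance strictly less than $1$ in that coordinate and hence strictly less than $1$ overall, contradicting $d(\mathcal M) = 1$. (This is where the precise choice of block length $=d(\mathcal M)$ and gap length $=l(\mathcal M)$ enters, and where the shift argument is needed to guarantee that \emph{some} translate of the block-coloring actually separates the diametral pair rather than merely the generic pair.)

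The main obstacle is the last point: a naive product of the $1$-dimensional block colorings does not by itself forbid $\mathcal M$, because a copy of $\mathcal M$ could straddle a gap in each coordinate while still being monochromatic. Handling this requires the shifting argument — choosing for each of $O(\log n)$ scales an independent random translate and coloring by the vector of block-indices across scales — so that with positive probability every potential copy of $\mathcal M$ is separated at some scale. Making this union bound close (over the $\binom{|M|}{2}$ pairs, the $n$ coordinates, and the $O(\log n)$ scales) while keeping the color count at $(1+\ell+o(1))^n \cdot \mathrm{poly}(n)$ is the technical heart; since the excerpt notes that ``essentially the same proof appeared in \cite{Pros2018_ExpRams}, \cite{Pros2020_LR},'' I would cite those and adapt the estimates rather than redo them from scratch.
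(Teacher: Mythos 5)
Your two geometric observations --- that a copy of $\mathcal M$ has $\ell_\infty$-diameter $d(\mathcal M)$ and is chain-connected with steps of $\ell_\infty$-length at most $l(\mathcal M)$ --- are exactly the ones the paper's proof rests on, so you have the right raw material. But your verification paragraph is garbled: you need \emph{both} observations, and in a definite order. The chain condition (step length at most $l(\mathcal M)$, strictly less than the gap width) is what forces a monochromatic copy to lie entirely inside a single block of the periodic structure; only then does the diameter condition (block side strictly shorter than $d(\mathcal M)$) give the contradiction. Your parenthetical dismissing the chain condition as ``not what we want'' is exactly backwards --- it is precisely what rules out the gap-straddling you yourself worry about two sentences later --- and a reader cannot tell from the text which argument you actually intend to run. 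Using exactly $d(\mathcal M)$ and $l(\mathcal M)$ with half-open intervals also forces you to reason about boundary cases; the paper sidesteps this by working with perturbed parameters $d'<d(\mathcal M)$ and $l'>l(\mathcal M)$ throughout and letting $d'\to d(\mathcal M)$, $l'\to l(\mathcal M)$ only at the end.

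More substantively, your construction --- a product of per-coordinate block colorings, patched by a random-shift argument over $O(\log n)$ ``scales'' --- is both more complicated than necessary and left unchecked at its crucial step (why the shifts eliminate every potential gap-straddling copy while the color count stays at the claimed level). The paper's route is cleaner and avoids the problem altogether: define the single periodic set $\mathcal{C}=\bigsqcup_{\mathbf v\in\Z^n}\bigl([0;d']^n+(d'+l')\mathbf v\bigr)$, show via the two observations that $\mathcal C$ contains no copy of $\mathcal M$, and then apply the Erd\H os--Rogers covering theorem as a black box to cover $\R^n$ with $(1+o(1))\,n\log n\,\bigl(1+l'/d'\bigr)^n$ translates of $\mathcal C$, assigning one color per translate. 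Each color class is then $\mathcal M$-free outright; there is no straddling issue to patch, the exponential base is simply the reciprocal density of $\mathcal C$, and the $n\log n$ factor falls out of the covering theorem rather than a bespoke shift-and-union-bound whose estimates you have not verified.
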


\begin{proof}
	
Fix arbitrary positive $d' < d(\mathcal M)$ and $l'>l(\mathcal M)$. Let $C = \left[0;d'\right]^n$ be an $n-$dimensional cube and put $\mathcal{C}=\bigsqcup_{\mathbf{v} \in \mathbb{Z}^n}\left( C+\left( d'+l'\right)\mathbf{v} \right) \subset \R^n$.

Assume that there is a subset $M' \subset \mathcal{C}$ that is a copy of $\mathcal M$ and consider any  $ \mathbf{x}, \mathbf{y} \in M'$. Let $\mathbf{z}_0, \mathbf{z}_1, \,\dots\,, \mathbf{z}_t \in M'$ be a sequence of points of $M'$ such that $\mathbf{z}_0=\mathbf{x}, \mathbf{z}_t = \mathbf{y}$, and $\|\mathbf{z}_i-\mathbf{z}_{i+1}\|_\infty \leq l(M)$ for all $0 \le i \le  t-1$. Since the distance between any two different translates of $C$ in $\mathcal{C}$ is at least $l' > l(\mathcal{M})$, we conclude that $\mathbf{z}_i$ and $\mathbf{z}_{i+1}$ belong to the same translate of $C$ for all $0 \le i \le t-1$. Hence, $\mathbf{x}$ and $\mathbf{y}$ belong to the same translate of $C$, and thus $M'$ lies entirely within the same translate of $C$. However, this is impossible, because the diameter of $C$ is equal to $d' < d(\mathcal{M})$. Thus, $\mathcal C$ does not contain a copy of $\mathcal M$.
	 
To conclude the proof we use the classical Erd\H{o}s--Rogers result \cite{ErRog1962} which states that $(1+o(1))n\log n\left(1+\frac{l'}{d'}\right)^n $ translates of $\mathcal C$ are sufficient to cover $\R^n$. Color all points of the $i$-th translate of $\mathcal C$ from the covering in one color. This is clearly a valid coloring. We also remark that $o(1)$ in the last formula does not depend on $d'$ and $l'$. Hence, letting $d' \rightarrow d$ and $l' \rightarrow l$ we obtain the claimed upper bound.\end{proof}

It is straightforward from the definition that $l(\mathcal{M}) \leq d(\mathcal{M})$ for all metric spaces $\mathcal{M}$. Note that the upper bound from Theorem~\ref{U1} is slightly worse than the trivial bound $\chi(\mathbb{R}^n_\infty; \mathcal{M}) \leq 2^n$ if $l(\mathcal{M}) = d(\mathcal{M})$, but is asymptotically better if $l(\mathcal{M}) < d(\mathcal{M})$.

Let $\mathcal{M}$ be a metric space such that $l(\mathcal{M}) < d(\mathcal{M})$ and $\frac{l(\mathcal{M})}{d(\mathcal{M})} \in \mathbb{Q}$. Applying a proper homothety, one can assume without loss of generality that $l(\mathcal{M}), d(\mathcal{M}) \in \mathbb{N}$. We can slightly improve the result of Theorem~\ref{U1} for such metric spaces using a simpler probabilistic argument.

\begin{Theorem} \label{U2}
	Let $\mathcal{M}= (M,\rho_M)$ be a finite metric space such that $|M| \ge 2$, $l(\mathcal{M})$ and $d(\mathcal{M})$ are integers satisfying $l(\mathcal{M}) < d(\mathcal{M})$. Then, we have $$\chi(\mathbb{R}^n_\infty; \mathcal{M}) < (1+o(1))n\log d(\mathcal{M})\left( 1+\frac{l(\mathcal M)}{d(\mathcal M)}\right)^n$$ as $n \rightarrow \infty$. 
\end{Theorem}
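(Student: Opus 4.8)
The plan is to adapt the covering argument from the proof of Theorem~\ref{U1}, but replace the deterministic Erd\H{o}s--Rogers covering of $\R^n$ by the non-forbidding set $\mathcal C$ with a direct random translation argument, which is cleaner when $d' = d(\mathcal M)$ can actually be attained (i.e. when $l,d$ are integers and we need not pass to a limit). First I would fix $l = l(\mathcal M)$, $d = d(\mathcal M)$, both positive integers with $l < d$, and set $C = [0;d)^n$ together with $\mathcal{C} = \bigsqcup_{\mathbf{v}\in\Z^n}\big( C + (d+l)\mathbf{v}\big)$. Exactly as in the proof of Theorem~\ref{U1}, a sequence $\mathbf z_0,\dots,\mathbf z_t$ with consecutive $\ell_\infty$-distances at most $l$ cannot jump between distinct translates of $C$, since those are separated by distance $l' \ge l+1 > l$; hence any copy of $\mathcal M$ inside $\mathcal C$ would be contained in a single translate of $C$, which has diameter $< d$ (using the half-open cube), contradicting $d(\mathcal M) = d$. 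So $\mathcal C$ is $\mathcal M$-free, and it is periodic with period $(d+l)\Z^n$.

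Next I would show that few translates of $\mathcal C$ cover $\R^n$. Because $\mathcal C$ is $(d+l)\Z^n$-periodic, it suffices to cover the fundamental torus $T = \R^n / (d+l)\Z^n$ by translates of the image $\bar{\mathcal C}$ of $\mathcal C$ in $T$; note $\bar{\mathcal C}$ is just the cube $[0;d)^n$ sitting inside the box $[0;d+l)^n$, so its normalized volume is $\big(\tfrac{d}{d+l}\big)^n$, equivalently the complement has volume fraction $1 - \big(\tfrac{d}{d+l}\big)^n$. The standard probabilistic covering lemma then gives: if we pick $N$ translation vectors uniformly and independently from $T$, the probability that some fixed point of $T$ is missed by all $N$ translates is $\big(1 - (\tfrac{d}{d+l})^n\big)^N$; a union bound over a fine enough net (or a direct integral/measure argument, since a point of $T$ is covered by a translate $\bar{\mathcal C} + \mathbf u$ iff $\mathbf u$ lies in a translate of $\bar{\mathcal C}$, an event of the same probability $(\tfrac{d}{d+l})^n$) shows that $N$ slightly larger than $\big(\tfrac{d+l}{d}\big)^n$ times $\log(\text{net size})$ suffices. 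The bookkeeping on the net size is where the factor $n\log d$ comes from: $T$ has diameter $O(d+l) = O(d)$ and $\bar{\mathcal C}$ contains a ball of radius $\Omega(1)$, so a net of size $(O(d))^n$ works, contributing $\log\big((O(d))^n\big) = (1+o(1)) n \log d$ to the bound on $N$. Combining, $N \le (1+o(1)) n \log d \cdot \big(1 + \tfrac{l}{d}\big)^n$ translates of $\mathcal C$ cover $\R^n$.

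Finally, coloring all points in the $i$-th translate of $\mathcal C$ with color $i$ yields a proper coloring avoiding monochromatic copies of $\mathcal M$ (each translate of an $\mathcal M$-free set is $\mathcal M$-free, since $\mathcal M$-freeness is translation-invariant), so $\chi(\R^n_\infty;\mathcal M) \le N$, which is the claimed bound. I expect the main obstacle to be making the covering lemma's net argument precise enough to land exactly the constant $n\log d$ rather than $n\log d(\mathcal M)$ plus an unwanted additive $O(n)$ term: one has to be careful that the radius of the inscribed ball in $\bar{\mathcal C}$ is bounded below by a constant independent of $n$ (it is, namely of order $d \ge 1$ after the half-open adjustment, or one can shrink $C$ slightly to $[0;d-\epsilon)^n$ and let $\epsilon \to 0$ at the very end exactly as in Theorem~\ref{U1}, absorbing the loss into the $o(1)$), so that the $\log(\text{net size})$ term is genuinely $(1+o(1)) n\log d$ and nothing larger. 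The rest is a routine repetition of the structural argument already carried out for Theorem~\ref{U1}.
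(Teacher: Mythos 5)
Your overall strategy (use the same $\mathcal M$-free periodic set $\mathcal C$ from Theorem~\ref{U1} and cover $\R^n$ by random translates of it) matches the paper, but the proposal has a real gap in the step that is supposed to produce the constant $n\log d(\mathcal M)$, and the fix you gesture at does not work.

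The issue is that a plain union bound over a net of the torus gives a factor $n\log(\text{net size})$, and no matter how you choose the net this is at least $n\log m$ where $m=d+l$ is the period (for a continuous $\epsilon$-net it is even $n\log(m/\epsilon)$). Since $\log m>\log d$ (and $\log(m/\epsilon)>\log d$ more so), the resulting bound is $n\log d\cdot(1+l/d)^n$ times a constant strictly greater than $1$; this constant is not $1+o(1)$, because $n\log m - n\log d = \Theta(n)$ is of the same order as $n\log d$ for fixed $d$. Shrinking the cube to $[0;d-\epsilon)^n$ and letting $\epsilon\to 0$ does not help: the $\log(1/\epsilon)$ contribution to $\log(\text{net size})$ blows up, and the $\log m$ term never goes away. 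So your argument, as written, proves the weaker bound $(1+o(1))\,n\log(Cd)\,(1+l/d)^n$ for some $C>1$.

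The paper avoids this in two ways, both of which are missing from your write-up. First, it exploits the integrality of $l$ and $d$ to replace the continuous net by the \emph{exact} discrete torus $[m-1]_0^n$: translation vectors are drawn uniformly from the $m^n$ integer points, and covering all integer points is shown to cover all of $\R^n$ via a floor argument that uses the half-open cube $[0;d)^n$ and $d\in\mathbb N$. There is no $\epsilon$ anywhere. Second, and this is the crucial idea, the paper does \emph{not} demand that the $s$ random translates cover everything. It takes only $s=\lfloor n\log d\,(m/d)^n\rfloor$ random translates, computes that the expected number of uncovered integer points is less than $(m/d)^n$ by linearity of expectation, fixes a choice achieving this, and then covers each leftover point by one additional translate of $\mathcal C$ centered at that point. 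The extra $(m/d)^n$ translates are negligible compared to $s$, and the total is $(1+o(1))\,n\log d\,(m/d)^n$. Without this ``cover leftovers individually'' step, you are forced into a whole-domain union bound and the constant $\log d$ is unreachable. If you incorporate both ideas (discrete torus, leftover points handled separately), the proof goes through and is essentially the paper's.
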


\begin{proof}
	For shorthand, denote $l = l(\mathcal{M}), d = d(\mathcal{M}),$ and $m=d+l \in \mathbb{N}$. Let $C = [0;d)^n$ and $\mathcal{C}=\bigsqcup_{\mathbf{v} \in \mathbb{Z}^n}\left( C+m\mathbf{v} \right) \subset \R^n$. As in the proof of Theorem~\ref{U1}, one can see that $\mathcal C$ does not contain a copy of $\mathcal M$, and we can use the same color for all of its points. It only remains to cover $\R^n$ using as few translates of $\mathcal{C}$ as possible.
	
	Set $s = \lfloor n\log d \left( \frac{m}{d}\right)^n \rfloor $ and let $\mathbf{v}_1, \,\dots\, ,\mathbf{v}_s$ be the elements of $[m-1]_0^n$ chosen uniformly and independently at random. Let $X \subset [m-1]_0^n$ be the subset consisting of all points $\mathbf{x} \in [m-1]_0^n$ that do not belong to any $\mathcal C + \mathbf{v}_i$, where $1 \leq i \leq s$.
	
	It is easy to see  that for $\mathbf{x} \in [m-1]_0^n$ and $1 \leq i \leq s$, we have $\Pr [\mathbf{x} \in \mathcal C + \mathbf{v}_i] = \left( \frac{d}{m} \right)^n$. Hence, from the mutual independence of the choice of $\mathbf v_i$'s it follows that 
	\begin{equation*}
		\Pr [\mathbf{x} \in X] = \left( 1-\Big( \frac{d}{m} \Big)^n\right)^s.
	\end{equation*}
	The linearity of the expectation gives
	\begin{equation*}
		\mathrm{E}[|X|] = \left( 1-\Big( \frac{d}{m} \Big)^n\right)^s m^n < \left( 1-\Big( \frac{d}{m} \Big)^n\right)^{n\log d \left( \frac{m}{d}\right)^n} m^n < e^{-n\log d} m^n = \left( \frac{m}{d} \right)^n.
	\end{equation*}
	
	Thus there is a way to fix the choice of $\mathbf{v}_1, \,\dots\, ,\mathbf{v}_s \in [m-1]_0^n$ such that $|X| \leq \left( \frac{m}{d} \right)^n$.	Consider the translates $\{\mathcal{C}+\mathbf{v}_i: 1 \leq i \leq s\} \cup \{\mathcal{C}+\mathbf{x}: \mathbf{x} \in X\}.$ Together, they cover all points of $[m-1]_0^n$ by construction. Moreover, they cover all points of $\R^n$ by periodicity. Finally, the number of translates we used is equal to $s+|X| = (1+o(1))n\log d\left( \frac{m}{d}\right)^n$ as  required. 
\end{proof}

\section{Concluding remarks and open problems}\label{sec5}

One of the key objects in this paper are the batons $\mathcal{B}_k$. It follows from Corollary~\ref{B2} and Theorem~\ref{U2} that   $\chi(\mathbb{R}^n_\infty; \mathcal{B}_k) = \left(\frac{k+1}{k} + o(1) \right)^n$ as $n \rightarrow \infty$. However, by calculating the exact values of $\chi(\mathbb{R}^n_\infty; \mathcal{B}_k)$ for some pairs of fixed small $n$ and $k$ we found that both the lower and upper bounds are not tight. %from Corollary~\ref{B2} and the upper bound that comes from the optimal hypercube covering with translates of a smaller cube are not tight.

For other batons the situation is much worse. For instance, given $\alpha > 1$, it follows from Theorems~\ref{B3} and \ref{U1} that $ \big( \frac{ \left\lceil \alpha \right\rceil +2 } {\left\lceil \alpha \right\rceil +1} \big)^n \leq \chi \left( \mathbb{R}^n_\infty; \mathcal B(1,\alpha) \right) \leq \big( 1+\frac{\alpha}{1+\alpha} +o(1) \big)^n$. Neither of these two bounds appear to be tight in general, and it is an interesting problem to determine the correct base of the exponent for this function.

For metric spaces other than batons Theorem \ref{T1} does not immediately give an explicit exponential lower bound. One may of course directly follow its proof to extract the lower bound on $\chi_\mathcal{M} > 1$ for each specific $\mathcal{M}$. The resulting bound would strongly depend on the dimension of the grid in which we embed $\mathcal{M}$, since each application of Theorem \ref{CartProd1} weakens the lower bound significantly. For example, a lower bound for $2-$dimensional grid $\mathcal{B}_2^2 = \mathcal{B}_2 \times \mathcal{B}_2$ one can extract from our proof is only $\chi\left( \mathbb{R}^n_\infty; \mathcal{B}_2^2\right)  \geq \left( 1.0667... +o(1)\right)^n$. We managed to generalize the argument from Section~\ref{sec2} to make it applicable not only for batons but also for different multidimensional grids. One of the results that we are able to obtain is that, for each fixed $k$ and $m$, one has $\chi(\mathbb{R}^n_\infty; \mathcal{B}_k^m) = \left(\frac{k+1}{k} + o(1) \right)^n$  as $n \rightarrow \infty$, i.e., the base of the exponent does not depend on $m$.
This result will be one of the subjects of a separate paper that is currently in preparation. 

Finally, let us explicitly state a question that arises in connection with the covering technique from Section~\ref{sec4}.

\begin{prb}
What is the smallest number $c(n)$ of translates of cubes $\{0,1\}^n$ that is needed to cover the torus $\Z_3^n$?  
\end{prb}
The best upper and lower bounds that we know of are $C_1 (3/2)^n\le c(n)\le C_2 n (3/2)^n$ for some constants $C_1,C_2$. Although the problem might appear simple, it seems to contain the difficulties that one is typically faced with when working with coverings as in Section~\ref{sec4}. 

\vspace{5mm}
\noindent {\bf Acknowledgments:}
We thank A.M.~Raigorodskii for his helpful suggestions and active interest in our work during the preparation of this paper. We also thank the referee for their helpful comments.

\vspace{5mm}
\noindent {\bf Conflicts of Interest:}
None.

\vspace{5mm}
\noindent {\bf Financial Support:}
The authors acknowledge the financial support from the Ministry of Education and Science of the Russian Federation in the framework of MegaGrant no. 075-15-2019-1926, the Russian Foundation for Basic Research grants no. 20-31-70039 and  no. 20-31-90009, and the Council for the Support of Leading Scientific Schools of the President of the
Russian Federation (grant no. N.Sh.-2540.2020.1). The research of the second author was supported in part by the  Simons Foundation and by the Moebius Contest Foundation for Young Scientists. The second author is a Young Russian Mathematics award winner and would like to thank its sponsors and jury.

%The authors acknowledge the financial support from the Ministry of Education and Science of the Russian Federation in the framework of MegaGrant no. 075-15-2019-1926, (Theorem~\ref{T1}),   the Russian Foundation for Basic Research grants no. 18-01-00355 (Theorem~\ref{B1}), no. 20-31-70039 (Theorem~\ref{B5}), and  no. 20-31-90009 (Lemma \ref{L2}), and the Council for the Support of Leading Scientific Schools of the President of the Russian Federation (grant no. N.Sh.-2540.2020.1)  (Theorems~\ref{U1} and~\ref{U2}). The research of the second author was supported in part by the  Simons Foundation and by the Moebius Contest Foundation for Young Scientists. The second author is a Young Russian Mathematics award winner and would like to thank its sponsors and jury.

\end{document}